\newtheorem{theorem}{Theorem}[section]
\newtheorem{lemma}{Lemma}[section]
\newtheorem{proposition}{Proposition}[section]
\newtheorem{corollary}{Corollary}[section]
\newtheorem{notation}{Notation}[section]
\theoremstyle{definition}
\newtheorem*{definition}{Definition}
\newtheorem*{remark}{Remark}
\newtheorem*{example}{Example}
\numberwithin{equation}{section}
\begin{document}

\title{Comparison Between the Fundamental Group Scheme of a Relative Scheme and that of its Generic Fiber}

%first author

\author{\sc Marco ANTEI}

\maketitle

\textbf{Résumé.} On montre que le morphisme canonique 
$\varphi:\pi_1(X_{\eta},x_{\eta})\to \pi_1(X,x)_{\eta}$ entre
le schéma en groupes fondamental de la fibre générique  $X_{\eta}$ d'un schéma  $X$ sur un schéma de Dedekind connexe et la fibre générique du schéma en groupes fondamental de $X$ est toujours fidèlement plat. On donnera ensuite des conditions nécessaires et suffisantes pour qu'un  $G$-torseur fini, dominé et pointé au dessus de $X_{\eta}$ puisse être étendu sur $X$. On décrira des exemples où  $\varphi:\pi_1(X_{\eta},x_{\eta})\to \pi_1(X,x)_{\eta}$ est un isomorphisme.

\bigskip

\textbf{Abstract.} We show that the natural morphism
$\varphi:\pi_1(X_{\eta},x_{\eta})\to \pi_1(X,x)_{\eta}$ between
the fundamental group scheme of the generic fiber $X_{\eta}$ of  a
scheme $X$ over a connected Dedekind scheme and the generic fiber of the
fundamental group scheme of $X$ is always faithfully flat. As an application we give a necessary and sufficient condition for a finite, dominated pointed $G$-torsor over $X_{\eta}$ to be extended over $X$. We finally provide examples where $\varphi:\pi_1(X_{\eta},x_{\eta})\to \pi_1(X,x)_{\eta}$ is an isomorphism.

\bigskip
\section{Introduction}
In \cite{Sai} and \cite{MatR} (respectively)
Sa\"{i}di and Romagny give an example of a $G$-torsor $Y$ over the
generic fiber $X_{\eta}$ of a scheme $X$ over a d.v.r. (i.e. a discrete valuation ring) $R$ of equal characteristic $p>0$ whith field of fractions $K$, such that the normal closure $\overline{Y}$ of $Y$ in
$X$ does not have any structure of torsor which extends the one
given on $Y$. Namely they construct such an example when
$X=Spec(R[x])$  and $G= (\mathbb{Z}/p^2\mathbb{Z})_K$.
Nevertheless one can ask whether we can find a scheme $Y'$ and a
torsor structure on it which extends the torsor structure on
$Y$.

This problem is tightly related to the study of
the fundamental group schemes of $X$ and of $X_{\eta}$. In
\cite{No} Nori gives the definition of the fundamental group
scheme $\pi_1(X,x)$ of a reduced, connected and proper scheme $X$
over a perfect field $k$ provided with a point $x\in X(k)$.  This
definition has been extended by Gasbarri in \cite{GAS} where he replaces $k$ by a Dedekind scheme $S$ (that is to say a normal noetherian scheme of dimension $\leq 1$) and where $X$ is a reduced
and irreducible scheme faithfully flat over $S$. The two
definitions coincide if $S$ is the spectrum of a perfect field.

In the first
part of this paper we will briefly recall Nori's and Gasbarri's definitions of the fundamental group scheme and  we
state some preliminary lemmas necessary to solve our
problem. Then we study the generic fiber  of Gasbarri's fundamental group scheme of a scheme $X$ over a connected Dedekind scheme $S$ putting it in relation with the fundamental group scheme of $X_{\eta}$, the generic fiber of $X$.
 The principal results of this paper are theorems \ref{teoremaEVVAI}
and \ref{teo2}. In theorem \ref{teoremaEVVAI} we prove that the
natural morphism $\varphi:\pi_1(X_{\eta},x_{\eta})\to
\pi_1(X,x)_{\eta}$ is always faithfully flat. This has been recently  proved by Garuti (cf. \cite{GarBT} \S 4, Theorem 4) when $X$ is normal and $S$ is the spectrum of a d.v.r.. As an application we prove theorem
\ref{teo2}, that gives sufficient and necessary condition for a pointed torsor over $X_{\eta}$ to be extended over $X$. As a corollary we have that any dominated  pointed
torsor over $X_{\eta}$ (the meaning of ``dominated torsor'' will be explained in the text) can be extended to a pointed torsor over
$X$ if and only if $\varphi$ is an isomorphism. This is always the case (see proposition \ref{PropVarAb}) for $X$ an abelian scheme whose fundamental group scheme is isomorphic to the divisible group of $X$, i.e. the inverse limit of the kernels of the multiplication by an integer maps (cf. again proposition \ref{PropVarAb}) using what Nori has already proved over a field (cf. \cite{No3}).  In this case we  prove that any pointed torsor (not only dominated) over the generic fiber $X_{\eta}$ of $X$ can be extended if and only if the finite group scheme acting on it has a model, which is always true when the field of functions of $S$ has characteristic $0$ (cf. \ref{PropVarAb2}). 

\section{The fundamental group scheme}
\subsection{Preliminaries}\label{sez:preliminari}
In \cite{No}, Nori defines the fundamental group scheme
$\pi_1(X,x)$ of a reduced, connected and proper scheme $X$ over a
perfect field $k$ provided with a point $x\in X(k)$ as the group
scheme associated to the neutral tannakian category
$(EF(X),\otimes,x^{\ast},\mathcal{O}_X)$ of essentially finite vector bundles over $X$. In \cite{No2}, Part I, Ch. II, \S 1
Nori gives a second equivalent description for his
fundamental group scheme. Gasbarri in \cite{GAS} develops this point of view. We give some details on Gasbarri's
construction. So
from now on  let $S$ be a Dedekind scheme, $X$ a reduced,
irreducible scheme, $j:X\rightarrow
S$ a faithfully flat morphism and $x:S\rightarrow X$ a fixed $S$-valued point. 

\begin{definition}\label{defTRIPLETS}
Let $\mathcal{P}(X)$ be the category whose objects are triples
 $(Y,G,y)$ where:
\begin{enumerate}
  \item $G$ is a finite and flat $S$-group scheme.
  \item $f:Y\rightarrow X$ is a $G$-torsor for the \textit{fpqc} topology.
  \item $y:S\rightarrow Y$ is a section such that $f(y)=x$.
\end{enumerate}
A morphism $\varphi:(Y_1,G_1,y_1)\rightarrow (Y_2,G_2,y_2)$
between two triples is the datum of two morphisms
$\alpha:Y_1\rightarrow Y_2$ and $\beta: G_1\rightarrow G_2$ where
$\beta$ is a group scheme morphism, $\alpha(y_1)=y_2$ and such that the
following diagram

\begin{displaymath}
\begin{array}{ccc}
  G_1\times Y_1 & \rightarrow & Y_1 \\
  \downarrow & \circlearrowleft & \downarrow \\
  G_2\times Y_2 & \rightarrow & Y_2
\end{array}
\end{displaymath}
commutes, the horizontal arrows being the actions.
\end{definition}

\begin{lemma}\label{lemmadelTERZOCAP} Let $\alpha:G\to H$ be a group scheme morphism,
$Y$ a $G$-torsor over $X$, $P$ an $H$-torsor over $X$ and
$\varphi:Y\to P$ a morphism between torsors compatible with the
actions of $G$ and $H$. Then $P$ is isomorphic to the contracted product $Y\times^G H$ (as defined in \cite{DG},
III, \S 4, 3.2).\end{lemma}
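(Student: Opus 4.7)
The plan is to use the universal property of the contracted product together with the fact that any morphism of torsors under the same group scheme is automatically an isomorphism.

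First, I would construct the comparison morphism $\psi:Y\times^G H\to P$. Recall that $Y\times^G H$ is the quotient of $Y\times H$ by the diagonal $G$-action $g\cdot(y,h)=(yg,\alpha(g)^{-1}h)$. On $Y\times H$ consider the morphism $(y,h)\mapsto \varphi(y)\cdot h$, where the dot denotes the $H$-action on $P$. Compatibility of $\varphi$ with the $G$- and $H$-actions gives
\[
\varphi(yg)\cdot\alpha(g)^{-1}h=\bigl(\varphi(y)\cdot\alpha(g)\bigr)\cdot\alpha(g)^{-1}h=\varphi(y)\cdot h,
\]
so this morphism is $G$-invariant and therefore descends, through the quotient $Y\times H\to Y\times^G H$, to a well-defined morphism $\psi:Y\times^G H\to P$ over $X$.

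Next, I would verify that $\psi$ is $H$-equivariant. This is immediate from the definition of the $H$-action on $Y\times^G H$, which is induced by right translation on the second factor. Hence $\psi$ is a morphism of $H$-torsors on $X$ for the fpqc topology, once one recalls (as in \cite{DG}, III, \S 4, 3.2) that $Y\times^G H$ does carry a canonical $H$-torsor structure over $X$: this follows because $Y\to X$ is fpqc and after pulling back along $Y\to X$ the contracted product trivializes to $Y\times H$.

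Finally, I would invoke the general principle that any morphism of $H$-torsors over a common base is an isomorphism. This can be checked after the fpqc base change that simultaneously trivializes both torsors; there $\psi$ becomes a self-morphism of the trivial torsor commuting with translation, which must be of the form $h\mapsto h_0\cdot h$ for some section $h_0$, hence an isomorphism. Descending back yields that $\psi$ itself is an isomorphism.

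The only subtle point is confirming that $Y\times^G H$ genuinely exists as a scheme and is an fpqc $H$-torsor over $X$ in the generality assumed here ($X$ reduced irreducible over a Dedekind base, $G$ finite flat); this is precisely why the statement refers to \cite{DG}, where the construction is carried out, so I expect no real obstacle beyond citing it correctly.
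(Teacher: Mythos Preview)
Your proof is correct and follows exactly the same route as the paper: define the map $(y,h)\mapsto\varphi(y)\cdot h$, observe it passes to the quotient by $G$, and conclude because any morphism of $H$-torsors is an isomorphism. You simply spell out in more detail the $G$-invariance check and the torsor-isomorphism principle that the paper states in a single line.
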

\proof For any $X$-scheme $T$
we have a canonical arrow:
$$\begin{array}{ccc}
Y(T)\times H(T)& \to & P(T)\\
(y,h)&\mapsto & \varphi(y)\cdot h
\end{array}$$
that passes to quotient (under the left action of $G$). We deduce
a morphism of $H$-torsors $Y\times^G H\to P$ over
$X$ which is then an isomorphism since every morphism between $H$-torsors is an isomorphism, hence the desired result.\endproof

Let $I$ be the set of isomorphism classes of objects of $\mathcal{P}(X)$; it is a poset when
provided with the following  relation: if $i,j\in I$ then $i\leq
j$ if and only if there exists a  morphism from the triple corresponding to $j$ to the triple corresponding to $i$. Moreover the following theorem holds:

\begin{theorem}\label{teoGABARRI} The set
$I$ is filtered. Then we can define a
pro-object\\
$\underleftarrow{lim}_{i\in
I } (Y_i,G_i,x_i)$. Moreover, $\pi_1(X,x)=\underleftarrow{lim}_{i\in
I }G_i$ is an $S$-group scheme
and $\widetilde{Y}=\underleftarrow{lim}_{i\in
I }Y_i$ is a scheme.\end{theorem}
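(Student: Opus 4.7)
I would establish the three assertions---that $I$ is filtered, that $\pi_1(X,x)$ is an $S$-group scheme, and that $\widetilde{Y}$ is a scheme---in that order.

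For the filtered property, given two triples $(Y_1,G_1,y_1)$ and $(Y_2,G_2,y_2)$, the natural candidate common upper bound is $(Y_3,G_3,y_3)$ with $Y_3 := Y_1\times_X Y_2$, $G_3 := G_1\times_S G_2$ acting diagonally, and $y_3 := (y_1,y_2)$. The three conditions of Definition~\ref{defTRIPLETS} are checked in turn: $G_3$ is finite and flat over $S$ as a product of two such; $Y_3 \to X$ is a $G_3$-torsor for the fpqc topology, since on a common fpqc refinement trivializing both $Y_1$ and $Y_2$ the product becomes $(G_1\times_S G_2)\times X$ equivariantly for the product action; and $f_3(y_3) = x$ by construction. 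The two projections then supply morphisms in $\mathcal{P}(X)$ from $(Y_3,G_3,y_3)$ to each $(Y_i,G_i,y_i)$, giving the required upper bound. The second axiom of a filtered system is automatic since $I$ is a poset, with at most one arrow between any two objects.

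To obtain $\pi_1(X,x)$, note that each $G_i$ is affine over $S$ (being finite), so $G_i = \mathrm{Spec}_S(\mathcal{A}_i)$ for a quasi-coherent sheaf of Hopf algebras; the transition morphisms $G_j\to G_i$ for $j\ge i$ yield a directed system of Hopf algebras whose colimit $\mathcal{A}_\infty$ is again a sheaf of Hopf algebras, and I set $\pi_1(X,x) := \mathrm{Spec}_S(\mathcal{A}_\infty)$. For $\widetilde{Y}$, each torsor $Y_i \to X$ is affine (fpqc-locally it is the affine morphism $G_i\times X\to X$, and affineness descends along fpqc covers), so $Y_i = \mathrm{Spec}_X(\mathcal{B}_i)$; the transition maps $Y_j\to Y_i$ are $X$-morphisms between $X$-affine schemes, hence themselves affine. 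Setting $\widetilde{Y} := \mathrm{Spec}_X\bigl(\underrightarrow{\lim}\,\mathcal{B}_i\bigr)$ gives a scheme affine over $X$, and the compatible $G_i$-actions on the $Y_i$ assemble into a natural $\pi_1(X,x)$-action on $\widetilde{Y}$.

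The principal technical obstacle is the torsor verification in the first step: one must exhibit a common fpqc cover trivializing both factors and check that the product trivialization is equivariant for the product action. The remainder is formal, relying on the classical equivalence between affine morphisms over a fixed base and quasi-coherent sheaves of algebras, under which filtered colimits of algebras correspond to cofiltered limits of affine schemes.
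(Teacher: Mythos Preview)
Your argument is correct and, unlike the paper, actually supplies a proof: the paper's ``proof'' of this theorem is a bare citation to \cite{GAS}, Proposition~2.1.

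There is one point worth noting. The version of the filtered axiom that Gasbarri verifies (and that this paper invokes later in the proof of Proposition~\ref{propGRUPPOFONDLINIDIRIDOT}) is slightly stronger: given two triples $(Y_1,G_1,y_1)$, $(Y_2,G_2,y_2)$ together with morphisms to a common target $(Y,G,y)$, one builds the fibered product $(Y_1\times_Y Y_2,\,G_1\times_G G_2,\,(y_1,y_2))$ and checks it is again an object of $\mathcal{P}(X)$ dominating both and making the square commute. Your construction is the special case where the common target is the trivial triple $(X,1_S,x)$. Because $I$ is declared to be a \emph{poset} (so there is at most one arrow between any two objects), your observation that the coequalizing axiom is automatic is correct, and mere directedness suffices; hence your simpler product over $X$ is enough for the statement as written. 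Gasbarri's fibered-product version is what is actually reused later in the paper when one needs the commuting-square property for \emph{sub}systems (e.g.\ the set $J$ of dominated triples), so it buys a bit more, but for the present theorem your argument is adequate.

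Your treatment of the remaining assertions---passing to the colimit of Hopf algebra sheaves for $\pi_1(X,x)$, and using fpqc descent of affineness to realise $\widetilde{Y}$ as $\mathrm{Spec}_X$ of a filtered colimit of quasi-coherent algebras---is exactly the standard way to make these objects into honest schemes, and is correct.
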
 \proof 
See \cite{GAS},
Proposition 2.1.\endproof

\begin{definition}\label{defGRUPPOGASBARRI}  We call the $S$-group scheme $\pi_1(X,x)$
 constructed in theorem
\ref{teoGABARRI} the fundamental group scheme. We call
the scheme  $\widetilde{Y}$ the $\pi_1(X,x)$-universal torsor
over $X$.\end{definition}

\begin{remark}\label{ossGRUPPOGASBARRI} When they can be compared Nori's and Gasbarri's construction coincide (cf. \cite{No2}, Ch. II). Thus  from now on we denote by $\pi_1(X,x)$
both Nori's and Gasbarri's fundamental group schemes and no confusion will arise.
\end{remark}

\begin{remark}\label{ossTRIPLEeCOPPIE} There is a bijection between isomorphism classes of triples $(Y,G,y)$  as in def. \ref{defTRIPLETS}  and $S$-group scheme morphisms $\rho:\pi_{1}(X,x)\rightarrow G$. Indeed given a triple $(Y,G,y)$, the morphism $\rho:\pi_{1}(X,x)\rightarrow G$ comes directly from 
theorem \ref{teoGABARRI}. On the other direction it is
sufficient to consider the contracted product
$\widetilde{Y}\times^{\pi_{{1}}(X,x)}G$. Lemma \ref{lemmadelTERZOCAP} ensures that one direction is the
inverse of the other.\end{remark}

\subsection{Some elementary lemmas}\label{sez:lemmiHOPF}

If not stated otherwise $S$ is any scheme. All group schemes that we will consider will be affine over $S$. We recall that a morphism of schemes $f:Z\to Y$ is called schematically dominant (cf. \cite{EGAIV-3}, Définition 11.10.2) if the corresponding morphism of $\mathcal{O}_X$-modules $f^{\#}:\mathcal{O}_Y\to f_{\ast}(\mathcal{O}_Z)$ is injective.

\begin{remark}\label{ossQUOZWW} Let $\beta:G'\rightarrow G$ be a morphism of affine (not necessarily finite) group schemes over a
field. The following are equivalent:
\begin{enumerate}
  \item $\beta:G'\rightarrow G$ is schematically dominant,
  \item $\beta:G'\rightarrow G$ is surjective for
the  \textit{fpqc} topology,
  \item $\beta:G'\rightarrow G$ is faithfully
flat.
\end{enumerate}
Indeed 1) $\Leftrightarrow$ 2) comes from  \cite{WW}, Ch. 15, \S 5 and  2) $\Leftrightarrow$ 3)  \cite{WW}, Ch. 14, \S 1.
\end{remark}

\begin{definition}\label{defTRIPLERIDOTTE}Let $S$ be a Dedekind scheme. A triple $(Y,G,y)$, as defined before,  is said to be a
dominated triple\footnote{N.B.: such a triple is called a
``reduced triple" in  \cite{No2}, Part I, Ch. II when $S$ is the spectrum of a field. Because of the
confusion that can arise we have decided to call it in a
different manner.} if for any triple $(Y',G',y')$ and any morphism
$\varphi=(\alpha,\beta):(Y',G',y')\rightarrow (Y,G,y)$, $\beta$
is a schematically dominant morphism. We will often refer to a triple (resp. dominated triple) $(Y,G,y)$ as a pointed (resp. dominated pointed) torsor.
\end{definition}

For the sake of completeness we give some details for the easy proofs of the following useful lemmas.

\begin{lemma}\label{lemFATTORIZZAGRUPPI}
Any $S$-morphism $f:G'\rightarrow G$ between group schemes  can be factored into a schematically dominant morphism
$s:G'\to F$ ($F$ some $S$-group scheme) and a
closed immersion  $i:F\hookrightarrow G$ such that $i\circ s = f$:
$$\xymatrix{G' \ar[rr]^{f} \ar[dr]_{s}& & G  \\
& F \ar@{^{(}->}[ur]_{i} } $$\end{lemma}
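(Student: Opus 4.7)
The plan is to construct $F$ as the scheme-theoretic image of $f$ in $G$ and to verify that the affine scheme so obtained inherits a group scheme structure making $s$ and $i$ morphisms of group schemes. Since the statement is local on the base, I reduce to $S=\mathrm{Spec}(R)$ affine, so that $G=\mathrm{Spec}(B)$ and $G'=\mathrm{Spec}(B')$ with $B,B'$ commutative Hopf $R$-algebras, and $f$ corresponds to a Hopf algebra morphism $\phi:=f^{\#}:B\to B'$.

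I then set $\mathcal{I}:=\ker(\phi)$ and $F:=\mathrm{Spec}(B/\mathcal{I})$. The canonical factorization $B\twoheadrightarrow B/\mathcal{I}\hookrightarrow B'$ immediately yields the diagram: the surjection corresponds to a closed immersion $i:F\hookrightarrow G$, and the injection corresponds to $s:G'\to F$, which is schematically dominant essentially by the definition recalled in Section~\ref{sez:lemmiHOPF}. The identity $i\circ s=f$ is then tautological.

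The substantive point is to show that $\mathcal{I}$ is a Hopf ideal of $B$, so that $B/\mathcal{I}$ carries a natural Hopf algebra structure and $s$, $i$ become morphisms of group schemes. The counit condition $\epsilon_{B}(\mathcal{I})=0$ and the antipode condition $S_{B}(\mathcal{I})\subset\mathcal{I}$ are immediate from the compatibilities $\epsilon_{B}=\epsilon_{B'}\circ\phi$ and $\phi\circ S_{B}=S_{B'}\circ\phi$. The only delicate verification---and the main obstacle---is the comultiplication condition $\Delta_{B}(\mathcal{I})\subset\mathcal{I}\otimes_{R}B+B\otimes_{R}\mathcal{I}$. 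Here I would appeal to flatness: in the paper's intended setting the relevant Hopf algebras are flat over $R$ (the group schemes being finite and flat over a Dedekind base), and flatness guarantees both that $(B/\mathcal{I})\otimes(B/\mathcal{I})\hookrightarrow B'\otimes B'$ remains injective and that $\mathcal{I}\otimes B+B\otimes\mathcal{I}$ coincides with the kernel of $B\otimes B\to (B/\mathcal{I})\otimes (B/\mathcal{I})$. Since $(\phi\otimes\phi)\circ\Delta_{B}=\Delta_{B'}\circ\phi$ vanishes on $\mathcal{I}$, the injectivity forces $\Delta_{B}(\mathcal{I})$ into this kernel, which is exactly the required inclusion.
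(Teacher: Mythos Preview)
Your proposal is correct and follows essentially the same route as the paper: reduce to the affine case and take $F$ to be the scheme-theoretic image, then check that the resulting algebra carries a Hopf structure. The only cosmetic difference is that the paper works with the image $H=\mathrm{Im}(\phi)$ as a sub-Hopf algebra of the target, whereas you work with the quotient $B/\mathcal{I}$ via the Hopf ideal $\mathcal{I}=\ker(\phi)$; these are of course the same object. You are in fact more explicit than the paper about why flatness is needed in the comultiplication check (the paper simply asserts $\Delta(H)\subseteq H\otimes H$ from the commutative square, which tacitly uses that $H\otimes H$ injects into the ambient tensor product).
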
\proof The question being local on $S$, we can assume $S$ to be an affine scheme, thus we set  $S:=Spec(B)$, $G:=Spec(C)$, $G':=Spec(A)$ to be affine. Denote by $h:A\to C$ the $B$-Hopf algebra morphism corresponding to $f$. Then $F:=Spec(Im(h))$ is
the $B$-group scheme with the desired properties. Indeed let $\Delta_A:A\rightarrow A\otimes A$ and $\Delta_C: C\rightarrow C\otimes C$ be
the comultiplications of (resp.) $A$ and $C$. By the following  commutative diagram:
$$\xymatrix{A \ar[r]^{h} \ar[d]_{\Delta_A} & C \ar[d]^{\Delta_C}\\ A\otimes A \ar[r]_{h\otimes h}  & C\otimes C  }$$

\noindent one deduces that $H:=Im(h)$, the $B$-submodule image of $h$ (then $B$-flat as $C$ is), is a $B$-subcoalgebra of $C$ since
$\Delta_C(H)\subseteq H\otimes H$. 
Similarly let $m_A:A\otimes A\rightarrow A$ and
$m_C:C\otimes C\rightarrow C $ be the multiplications of  $A$ and $C$ we have 

$m_C(H\otimes H)\subseteq H$. If moreover $S_A$ and $S_C$ are the antipodal morphisms of $A$ and $C$
one can easily verify that $S_C(H)\subseteq H$ and this gives $H$ the desired $B$-Hopf algebra structure.
\endproof

\begin{lemma}\label{lem2.2} Let  $(G_i,\gamma_i^l)_{i\in I}$ be an inverse system of $S$-group schemes
and $G= \underleftarrow{lim}_{i\in I}G_i$; we have, for any pair
$(i,l)$ such that $i\leq l$, the following commutative diagram:
$$\xymatrix{G_i   & \ar[l]_{\rho_i} G \ar[dl]^{\rho_l}\\ G_l  \ar[u]^{\gamma_i^l}.  &  }$$
Then the canonical morphism $\rho_i$  is schematically dominant if and only if for any
$l\geq i$ the map $\gamma_i^l:G_l\rightarrow G_i$ is
schematically dominant.
\end{lemma}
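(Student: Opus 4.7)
The plan is to translate the statement into its algebraic dual. All group schemes in sight are affine over $S$, and working Zariski-locally on $S$ I may assume $G_i = \mathrm{Spec}(A_i)$; since $\mathrm{Spec}$ converts colimits of rings into limits of affine schemes, $G = \mathrm{Spec}(A)$ with $A = \varinjlim_{l} A_l$. Under this duality $\rho_i$ corresponds to the structural morphism $\iota_i \colon A_i \to A$, and $\gamma_i^l$ corresponds to $\delta_i^l \colon A_i \to A_l$. From the definition recalled in Section \ref{sez:lemmiHOPF}, a morphism of affine schemes is schematically dominant if and only if the associated ring map is injective. The assertion therefore reduces to the purely algebraic claim: $\iota_i$ is injective iff $\delta_i^l$ is injective for every $l \geq i$.

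The ``only if'' direction is immediate since $\iota_i$ factors as $A_i \xrightarrow{\delta_i^l} A_l \to A$ for every $l \geq i$, and injectivity of the composite forces injectivity of the first factor. For the converse I would invoke the standard description of filtered colimits of modules: because the indexing poset $I$ is filtered (the context in which this lemma is applied, cf.\ Theorem \ref{teoGABARRI}), an element $a \in A_i$ has zero image in $A$ if and only if there exists some $l \geq i$ with $\delta_i^l(a) = 0$; if every $\delta_i^l$ is injective this forces $a = 0$, and hence $\iota_i$ is injective.

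I do not anticipate any substantial obstacle. The only matters of care are tracking the reversal of arrows under the affine duality, checking that schematic dominance is local on $S$ so that one may indeed reduce to the affine setting, and invoking filteredness of $I$ to justify the elementary zero-detection criterion on which the converse rests.
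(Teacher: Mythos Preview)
Your proposal is correct and follows essentially the same approach as the paper: both reduce to the affine case, dualize to a statement about injectivity of the structural maps into a direct limit of algebras, and handle the nontrivial direction via the zero-detection criterion in a filtered colimit. The only cosmetic difference is that the paper spells out the colimit explicitly as $\bigl(\coprod_k A_k\bigr)/\!\sim$ and chases an element through the equivalence relation, whereas you invoke the standard fact directly; your remark that filteredness of $I$ is what makes this work is apt, since the lemma as stated does not include it among the hypotheses.
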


\begin{proof} As before we set $S:=Spec(B)$, $G_i:=Spec(A_i)$ and $G:=Spec(A)=Spec(\underrightarrow{lim}_{i\in I}A_i)$ and we prove the dual statement for $B$-Hopf algebras, where $f_i^l:A_i\rightarrow A_l$ corresponds to $\gamma_i^l$ and $\alpha_i:A_i\to A$ to $\rho_i$. One direction is obvious. In the
other direction, we suppose that $f_i^l$ is
injective for all $l\geq i$; let $x\in A_i$ and $\alpha_i(x)=0$.
Now, we set $y:=f_i^l(x)\in A_l$, we know that $\alpha_l(y)=0$
according to the previous diagram; but $\alpha_l$ is defined as
the composition of the following morphisms:
\begin{displaymath}
\begin{array}{cccccc}
  \alpha_l:& A_l &\hookrightarrow & \coprod_{k \in I} A_k
  &\twoheadrightarrow &
\frac{\coprod_{k\in I} A_k}{\thicksim}\simeq A \\
   & y & \mapsto & y & \mapsto & 0
\end{array}
\end{displaymath}
where, for $a_i\in A_i$ and  $a_j\in A_j$, we write $a_i \thicksim a_j$ if and only if there exists $k\geq i$,
$k\geq j$ such that $f_i^k(a_i)=f_j^k(a_j)$ and this means that there
exist $r\in I$, $r\geq l$ and $f_l^r:A_l\rightarrow A_r$ such that
$f_l^r(y)=0$, in particular the morphism $f_i^r=f_l^r\circ
f_i^l:A_i\rightarrow A_r$  maps $x$ into $0$, but according to the assumption on $A_i$ the morphism
$f_i^r$ is injective and then
$x=0$.
\end{proof}

As a consequence we have the following

\begin{corollary}\label{corollRIDOTTO}Let $S$ be a Dedekind scheme. A triple $(Y,G,y)$ is
dominated  if and only if the
morphism $\rho:\pi_{1}(X,x)\rightarrow G$ naturally associated to
this triple  is a schematically dominant
morphism.\end{corollary}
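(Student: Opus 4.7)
The plan is to combine two tools from earlier in the excerpt: the bijection in Remark \ref{ossTRIPLEeCOPPIE} between triples and $S$-group scheme morphisms out of $\pi_1(X,x)$, and Lemma \ref{lem2.2}, which translates schematic dominance of the canonical projection out of an inverse limit of group schemes into schematic dominance of all the transition morphisms lying above a fixed index.

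First I would fix an index $i\in I$ representing the isomorphism class of $(Y,G,y)$, so that $(Y,G,y)\cong (Y_i,G_i,x_i)$ and, via Remark \ref{ossTRIPLEeCOPPIE} and Theorem \ref{teoGABARRI}, $\rho$ is identified with the canonical projection $\rho_i:\pi_1(X,x)\to G_i$. Next I would translate the definition of ``dominated'' into conditions on the filtered system $(G_i,\gamma_i^l)_{i\in I}$. Any triple $(Y',G',y')$ represents some class $l\in I$; by the very definition of the order on $I$, the existence of a morphism $(Y',G',y')\to (Y,G,y)$ forces $l\geq i$, and its group component then coincides (up to the identification $(Y',G',y')\cong (Y_l,G_l,x_l)$) with the transition morphism $\gamma_i^l:G_l\to G_i$. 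Consequently $(Y,G,y)$ is dominated if and only if $\gamma_i^l$ is schematically dominant for every $l\geq i$.

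At this point Lemma \ref{lem2.2} applied to the inverse system $(G_i,\gamma_i^l)_{i\in I}$ converts this into the assertion that $\rho_i=\rho$ is schematically dominant, which is exactly the desired conclusion in both directions.

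The main subtle point is the second step: one has to check that the group component of any morphism of triples into $(Y_i,G_i,x_i)$ is, after the canonical identification, really the transition morphism $\gamma_i^l$, so that the universal quantification appearing in Definition \ref{defTRIPLERIDOTTE} reduces to a condition on the single inverse system to which Lemma \ref{lem2.2} applies. This is a bookkeeping verification using the rigidity forced by the pointings (a morphism of pointed torsors is determined by its base-point condition, and the group component is then dictated by compatibility with the actions), but it is the only place where care is needed; everything else is a direct appeal to the two ingredients above.
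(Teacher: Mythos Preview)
Your overall strategy---identify $(Y,G,y)$ with an index $i\in I$ and invoke Lemma~\ref{lem2.2}---is exactly the one the paper intends. One direction of your intermediate equivalence is fine: each transition map $\gamma_i^l$ is by construction the group component of \emph{some} morphism of triples into $(Y_i,G_i,x_i)$, so if $(Y,G,y)$ is dominated then every $\gamma_i^l$ is schematically dominant, and Lemma~\ref{lem2.2} gives that $\rho_i$ is schematically dominant.

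The gap is in the converse, precisely at the ``rigidity'' step you flag. Morphisms of pointed torsors are \emph{not} unique: for any nontrivial finite flat $G$ the trivial pointed $G$-torsor $(G\times_S X,\,G,\,(e,x))$ admits the self-map $\alpha(g,t)=(e,t)$, $\beta=0$, which satisfies all the axioms of Definition~\ref{defTRIPLETS} but has non-dominant group component. So the group part $\beta$ of an arbitrary morphism $(Y',G',y')\to(Y,G,y)$ need not coincide with the chosen transition map $\gamma_i^l$, and you cannot deduce that $\beta$ is schematically dominant merely from knowing that all the $\gamma_i^l$ are.

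The fix does not require rigidity at all. Once Lemma~\ref{lem2.2} gives you that $\rho_i$ is schematically dominant, argue directly: by Remark~\ref{ossTRIPLEeCOPPIE} and Lemma~\ref{lemmadelTERZOCAP}, any morphism of triples $(Y',G',y')\to(Y,G,y)$ with group component $\beta$ forces $\rho_i=\beta\circ\rho'$, where $\rho':\pi_1(X,x)\to G'$ is the morphism attached to $(Y',G',y')$. Since $\rho_i$ is schematically dominant and factors through $\beta$, the map $\beta$ is schematically dominant as well. This replaces your rigidity claim and completes the ``if'' direction.
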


\begin{lemma}\label{lemmaSCIAPO} Let $S$ be a Dedekind scheme. Let $\rho:\pi_{1}(X,x)\rightarrow G$ be an $S$-morphism of group schemes where $G$ is finite and flat over $S$. Then there exist an $S$-group scheme $G'$ finite and flat over $S$, an $S$-morphism of group schemes
$\rho':\pi_{1}(X,x)\to G'$ and a closed immersion $\beta:G' \to  G $ such that
$\beta\circ \rho'=\rho$ and $\rho'$ is a schematically dominant  morphism.\end{lemma}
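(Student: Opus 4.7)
The plan is to apply Lemma \ref{lemFATTORIZZAGRUPPI} directly to $\rho:\pi_1(X,x)\to G$, and then to verify that the intermediate group scheme produced by that lemma is automatically finite and flat over $S$. More precisely, I would first invoke Lemma \ref{lemFATTORIZZAGRUPPI} with source $\pi_1(X,x)$ and target $G$, obtaining a factorization $\pi_1(X,x)\xrightarrow{\rho'}G'\xrightarrow{\beta}G$ with $\rho'$ schematically dominant and $\beta$ a closed immersion, satisfying $\beta\circ\rho'=\rho$. This produces the arrows required by the statement, leaving only the two finiteness properties of $G'$ to verify.

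The finiteness of $G'$ over $S$ is automatic, since $\beta$ realizes $G'$ as a closed subscheme of the finite $S$-scheme $G$. The only substantive point is flatness. Since flatness is local on $S$, I would reduce to the affine case $S=\mathrm{Spec}(B)$ with $B$ a Dedekind ring. The explicit construction in the proof of Lemma \ref{lemFATTORIZZAGRUPPI} exhibits $G'$ as $\mathrm{Spec}(H)$, where $H$ is the image of the Hopf algebra morphism $\mathcal{O}(G)\to\mathcal{O}(\pi_1(X,x))$ dual to $\rho$. By Theorem \ref{teoGABARRI}, $\pi_1(X,x)=\varprojlim_{i\in I}G_i$ with each $G_i$ finite and flat over $S$, so $\mathcal{O}(\pi_1(X,x))=\varinjlim_{i\in I}\mathcal{O}(G_i)$ is a directed colimit of flat $B$-modules, hence itself flat, hence torsion-free (as $B$ is Dedekind). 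The submodule $H\subseteq\mathcal{O}(\pi_1(X,x))$ is therefore torsion-free, and it is finitely generated because it is a quotient of the finite $B$-module $\mathcal{O}(G)$. A finitely generated torsion-free module over a Dedekind ring is projective, so $H$ is projective, hence flat, over $B$, as desired.

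The step I would expect to be the main obstacle is precisely the flatness of $G'$: over a more general base, taking the image of a Hopf-algebra homomorphism need not preserve flatness, and the intermediate object in Lemma \ref{lemFATTORIZZAGRUPPI} could then be a closed subgroup scheme of $G$ that is not flat over $S$. The Dedekind hypothesis on $S$ is what makes the argument immediate, because flatness and torsion-freeness coincide for finitely generated modules over such a base, and torsion-freeness is inherited for free from the ambient flat Hopf algebra $\mathcal{O}(\pi_1(X,x))$.
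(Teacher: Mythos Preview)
Your proof is correct and takes the same approach as the paper, which simply invokes Lemma~\ref{lemFATTORIZZAGRUPPI} to produce the factorization and then notes that the associated pointed torsor is dominated. The paper leaves the finiteness and flatness of $G'$ implicit (the flatness being absorbed into the parenthetical remark in the proof of Lemma~\ref{lemFATTORIZZAGRUPPI}), whereas you spell out the Dedekind argument explicitly; this is a welcome clarification rather than a different route.
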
\proof The
existence of morphisms $\rho':\pi_{1}(X,x)\to G'$
(schematically dominant) and $\beta:G' \rightarrow  G$ (closed immersion) such that $\beta\circ
\rho'=\rho$
$$\xymatrix{\pi_{1}(X,x) \ar[rr]^{\rho} \ar[dr]_{\rho'}& & G  \\
& G' \ar@{^{(}->}[ur]_{\beta} } $$ is ensured by  lemma
\ref{lemFATTORIZZAGRUPPI}.  The pointed torsor associated to $\rho'$ is then
dominated.\endproof

According to lemma \ref{lemmaSCIAPO} we can say that any triple  $(Y,G,y)$ is 
\textit{preceded} by a dominated triple or equivalently any pointed torsor over $X$ is \textit{preceded} by a dominated pointed torsor.

\begin{lemma}\label{lem2.3} Let 
  $(G_i,\gamma_i^j)_{i\in I}$ be an inverse system of $S$-group schemes, $G= \underleftarrow{lim}_{i\in I}G_i$ and
$\rho_i: G\rightarrow G_i$ the canonical morphism. Let $J$ be a filtered subset of $I$
and  $G'= \underleftarrow{lim}_{j\in J}G_j$. We assume that for
any $j\in J$ the canonical morphism
$$\rho'_j:G'\rightarrow G_j$$ is schematically dominant. Then the natural morphism  $\varphi:G\rightarrow G'$ is schematically dominant if
and only if $\rho_j:G\rightarrow G_j$ is schematically dominant for any
$j\in J$.\end{lemma}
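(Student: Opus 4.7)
The plan is to dualize to Hopf algebras, in the spirit of Lemma \ref{lem2.2}, and reduce the statement to an elementary injectivity check on a filtered colimit. Working locally on $S$, I write $S=Spec(B)$, $G_i=Spec(A_i)$, so that $G=Spec(A)$ with $A=\underrightarrow{lim}_{i\in I}A_i$ and $G'=Spec(A')$ with $A'=\underrightarrow{lim}_{j\in J}A_j$; the natural morphism $\varphi:G\to G'$ then corresponds to the $B$-Hopf algebra map $\varphi^{\#}:A'\to A$ induced by the inclusion $J\subseteq I$. Under this dictionary, schematic dominance of each of $\rho_j$, $\rho_j'$, $\varphi$ is equivalent to injectivity of the corresponding ring morphism.

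The forward direction is then immediate: if $\varphi^{\#}$ is injective, the factorisation $\rho_j=\rho_j'\circ\varphi$ gives, on rings, $A_j\hookrightarrow A'\hookrightarrow A$, a composition of two injections (the first by the standing assumption on $\rho_j'$), whence $\rho_j$ is schematically dominant. Conversely, assume every $A_j\to A$ is injective and let $y\in A'$ satisfy $\varphi^{\#}(y)=0$. Since $J$ is filtered, one finds $j_0\in J$ and $y_0\in A_{j_0}$ whose image in $A'$ equals $y$; the image of $y_0$ in $A$ therefore equals $\varphi^{\#}(y)=0$, so the injectivity of $A_{j_0}\to A$ forces $y_0=0$ and hence $y=0$. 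This shows $\varphi^{\#}$ is injective, i.e.\ $\varphi$ is schematically dominant.

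I do not foresee a genuine obstacle: the whole content is the interplay between filtered colimits and injectivity, and the argument is essentially a reiteration of the trick used in Lemma \ref{lem2.2}, now applied to the sub-system indexed by $J$. The Hopf algebra structure plays no active role beyond ensuring that the setting is that of affine $S$-group schemes and that the filtered colimit presentation of $A$ and $A'$ is available.
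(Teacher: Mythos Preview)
Your argument is correct and is essentially the same as the paper's own proof: both reduce to the affine case, translate schematic dominance into injectivity of the corresponding Hopf-algebra maps, and verify injectivity of $\varphi^{\#}:A'\to A$ by lifting an element of $A'$ to some $A_{j_0}$ via the filtered-colimit description. The paper merely phrases the lifting step through the explicit presentation $A_j\hookrightarrow\coprod_{u\in J}A_u\twoheadrightarrow A'$, while you invoke the standard fact about filtered colimits directly; the content is identical.
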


\proof Again we can assume that $S$ is affine, so we set $S:=Spec(B)$, $G_i:=Spec(A_i)$, $G:=Spec(A)=Spec(\underrightarrow{lim}_{i\in I}A_i)$ and $G':=Spec(C)=Spec(\underrightarrow{lim}_{j\in J}A_j)$. For any $j\in J$
we have the following commutative diagram:

$$\xymatrix{C\ar[r]^{\psi} & A\\ A_j \ar@{^{(}->}[u]^{\gamma_j} \ar[ur]_{\alpha_j}}$$
where $\psi$, $\alpha_j$ and $\gamma_j$  correspond respectively to $\varphi$, $\rho_j$ and $\rho'_j$. If $\psi$ is injective then
$\alpha_j$ is injective too for all $ j\in J$ (obvious). Conversely, suppose $\alpha_j$ injective for all $j\in J$ and
let $x\in C$ be such that   $\psi (x)=0$. Once again we make use
of the canonical factorisation:
\begin{displaymath}
\gamma_j: A_j \hookrightarrow  \coprod_{u \in J} A_{u}
  \twoheadrightarrow
\frac{\coprod_{u \in J} A_{u}}{\thicksim}\simeq C .
\end{displaymath}
So let $z\in \coprod_{u \in J} A_{u}$ be a representing element of
$x\in C$, it follows that there exists $v\in J$ such that $z\in A_v$
and $\gamma_v(z)=x$, in particular we have  $0=\psi(x)=\psi \circ
\gamma_v (z)= \alpha_v(z)$, but since we have assumed $\alpha_v$
to be injective we have  $z=0$, then $x=\gamma_v(z)=0$.\endproof

\begin{lemma}\label{lemmaSURIETT} Let 
  $(G_i,\gamma_i^j)_{i\in I}$ be an inverse system of $S$-group schemes, $G= \underleftarrow{lim}_{i\in I}G_i$ and
$\rho_i: G\rightarrow G_i$ the canonical morphism. Let $J$ be a filtered subset of $I$
and  $G'= \underleftarrow{lim}_{j\in J}G_j$. If for every $i\in I$ there exists a morphism $\gamma_i:G'\to G_i$ such that the following diagram 
$$\xymatrix{G\ar[r]^{\varphi}\ar[d]_{\rho_i} & G'\ar[dl]^{\gamma_i}\\ G_i  & }$$
commutes, then $\varphi$ is a closed immersion. If moreover every $\rho_i$ is schematically dominant then $\varphi$ is an isomorphism.  \end{lemma}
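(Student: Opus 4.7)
The plan is to mimic the Hopf-algebra arguments of the previous lemmas in this subsection. First reduce to the case $S=Spec(B)$ affine, and write $G=Spec(A)$, $G_i=Spec(A_i)$, $G'=Spec(A')$ with $A=\underrightarrow{lim}_{i\in I}A_i$ and $A'=\underrightarrow{lim}_{j\in J}A_j$. Dually, the projections $\rho_i$, the structural maps $\rho'_j:G'\to G_j$ (for $j\in J$), and $\varphi$ correspond to $B$-algebra morphisms $\alpha_i:A_i\to A$, $\alpha'_j:A_j\to A'$ and $\varphi^{\#}:A'\to A$, tied together by $\varphi^{\#}\circ\alpha'_j=\alpha_j$ for all $j\in J$ (this is how $\varphi$ was defined in the first place). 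The hypothesis provides, for every $i\in I$, a $B$-algebra map $\gamma^{\#}_i:A_i\to A'$ with $\varphi^{\#}\circ\gamma^{\#}_i=\alpha_i$.

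For the first claim, it suffices to prove that $\varphi^{\#}$ is surjective, which will make $\varphi$ a closed immersion. Given any $x\in A$, the description of $A$ as a filtered colimit lets me write $x=\alpha_i(x_i)$ for some $i\in I$ and $x_i\in A_i$; then $\varphi^{\#}(\gamma^{\#}_i(x_i))=\alpha_i(x_i)=x$, so $x$ lies in the image of $\varphi^{\#}$. Note that I never use any compatibility between the $\gamma_i$ and the transition morphisms $\gamma_i^l$ of the inverse system.

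For the second claim I upgrade this to injectivity of $\varphi^{\#}$ under the extra assumption. The hypothesis that $\rho_j:G\to G_j$ is schematically dominant means exactly that each $\alpha_j:A_j\to A$ is injective (it is enough to apply this to $j\in J\subseteq I$). Pick $y\in A'$ with $\varphi^{\#}(y)=0$; since $A'$ is the filtered colimit of the $A_j$ over $j\in J$, I may represent $y=\alpha'_j(y_j)$ for some $j\in J$ and $y_j\in A_j$. Then $0=\varphi^{\#}(y)=\varphi^{\#}(\alpha'_j(y_j))=\alpha_j(y_j)$, and injectivity of $\alpha_j$ forces $y_j=0$, so $y=0$. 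The one conceptual trap to avoid is the temptation to glue the family $(\gamma_i)_{i\in I}$ into a single morphism $G'\to G$ via the universal property of the limit, which would require checking $\gamma^l_i\circ\gamma_l=\gamma_i$ for $i\leq l$; this compatibility is not part of the hypotheses and is simply not needed, because the two verifications above are each localised at a single index.
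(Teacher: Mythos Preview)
Your argument is correct and follows exactly the strategy the paper has in mind: the paper's proof reads ``Similar to the previous one,'' and your dualisation to filtered colimits of Hopf algebras, with the surjectivity/injectivity of $\varphi^{\#}$ checked at a single index, is precisely that similar argument spelled out. Your closing remark that no compatibility among the $\gamma_i$ is needed is a nice clarification but does not depart from the paper's approach.
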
\proof Similar to the previous one.\endproof

\subsection{A comparison theorem}\label{sez:structure}

From now on  $S$  will denote  a connected  Dedekind
scheme. We have recalled that the fundamental group scheme is the projective limit of $S$-finite and flat group schemes as follows
$$\pi_{1}(X,x):=\underleftarrow{lim}_{i\in I} G_i;$$ 
we will denote by
$$\rho_i:\pi_{1}(X,x)\rightarrow G_i$$
the corresponding canonical morphisms.

\begin{proposition}\label{propGRUPPOFONDLINIDIRIDOT}
Let $J\subseteq I$ be the set of all $i\in I$ such that
$\rho_i:\pi_{1}(X,x)\to G_i$ is a schematically dominant morphism. The group scheme $\pi_{1}(X,x)$ is isomorphic to the
projective limit of all the finite and flat $S$-group schemes $G_j$,
$j\in J$, i.e. $\pi_{1}(X,x)\simeq \underleftarrow{lim}_{j\in J} G_j$.
\end{proposition}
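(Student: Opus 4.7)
The plan is to show that $J$ is a cofinal filtered subset of $I$ and then invoke the general fact that restricting a filtered inverse system to a cofinal filtered subset preserves the limit. This essentially formalises the remark (already made just after Lemma \ref{lemmaSCIAPO}) that every pointed torsor is preceded by a dominated one.

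First I would verify cofinality. Fix $i \in I$ with associated morphism $\rho_i : \pi_1(X,x) \to G_i$ and triple $(Y_i, G_i, y_i)$. Lemma \ref{lemmaSCIAPO} provides a finite flat $S$-group scheme $G'$, a schematically dominant $\rho' : \pi_1(X,x) \to G'$, and a closed immersion $\beta : G' \hookrightarrow G_i$ with $\beta \circ \rho' = \rho_i$. By Remark \ref{ossTRIPLEeCOPPIE} the morphism $\rho'$ comes from a triple $(Y', G', y')$, whose class I denote $i' \in I$; by Corollary \ref{corollRIDOTTO} it is dominated, so $i' \in J$. The closed immersion $\beta$ then lifts to a morphism of triples $(Y', G', y') \to (Y_i, G_i, y_i)$ via the contracted-product description of Remark \ref{ossTRIPLEeCOPPIE}, namely $\alpha : Y' \simeq \widetilde{Y} \times^{\pi_1(X,x)} G' \to \widetilde{Y} \times^{\pi_1(X,x)} G_i \simeq Y_i$. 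By the definition of the ordering on $I$, this gives $i \leq i'$, so $J$ is cofinal.

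Filteredness of $J$ follows at once from filteredness of $I$ (Theorem \ref{teoGABARRI}) together with cofinality: given $j_1, j_2 \in J$, pick $k \in I$ above both, then $j \in J$ above $k$. Since $J$ is a cofinal filtered subset of $I$, the natural morphism $\underleftarrow{lim}_{i \in I} G_i \to \underleftarrow{lim}_{j \in J} G_j$ is an isomorphism, which is the statement of the proposition.

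The only slightly subtle point is the passage from the closed immersion $\beta$ of group schemes to a genuine morphism of triples (equipped with a compatible scheme morphism $\alpha$ on the torsor side). The contracted-product formalism of Remark \ref{ossTRIPLEeCOPPIE} makes this essentially automatic, since $\widetilde{Y} \times^{\pi_1(X,x)} (-)$ is functorial and automatically respects the chosen base points and the group actions. I do not expect any substantive obstacle beyond the bookkeeping in this step.
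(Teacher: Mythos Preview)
Your argument is correct and follows essentially the same route as the paper. Both proofs hinge on Lemma~\ref{lemmaSCIAPO} to show that every index in $I$ is dominated by one in $J$; the paper then invokes the tailor-made Lemmas~\ref{lem2.3} and~\ref{lemmaSURIETT} to check that the comparison map is simultaneously schematically dominant and a closed immersion, whereas you package the same content as cofinality of $J$ in $I$ and appeal to the general principle that a cofinal filtered subsystem computes the same inverse limit. The only cosmetic difference is the order of the two verifications: the paper first checks that $J$ is filtered directly (reproducing Gasbarri's fibre-product construction and then dominating by Lemma~\ref{lemmaSCIAPO}), and only afterwards establishes the factorisation through some $G_j$ for each $i\in I$, while you obtain filteredness as an immediate consequence of cofinality in the directed poset $I$. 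Your presentation is slightly more economical; the paper's has the minor advantage of staying entirely within the lemmas already proved in \S\ref{sez:lemmiHOPF}.
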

\proof $J$ is filtered, indeed the construction made in \cite{GAS}, Proposition 2.1 still holds: so given two dominated pointed torsors $(Y_1,G_1,y_1)$ and $(Y_2,G_2,y_2)$ and morphisms over a third dominated pointed torsor $(Y,G,y)$ over $X$ we can construct a fourth pointed torsor $(P,H,p)$ with morphisms over the first two making the diagram $$\xymatrix{(P,H,p)\ar[d]\ar[r] & (Y_1,G_1,y_1)\ar[d]\\(Y_2,G_2,y_2)\ar[r]  & (Y,G,y)}$$ commute. If $(P,H,p)$ is not a dominated torsor it is sufficient to use lemma \ref{lemmaSCIAPO} in order to find a dominated pointed torsor
$(P',H',p')$ making a similar diagram commute. Since $J\subset I$ we have a canonical morphism $v:\pi_1(X,x)\to \underleftarrow{lim}_{j\in J} G_j$ which is an isomorphism: indeed it is schematically dominant by lemma \ref{lem2.3}, since by definition for all $j\in J$ $\xymatrix{\pi_1(X,x) \ar[r]^{v} & \underleftarrow{lim}_{j\in J} G_j \ar[r]& G_j}$ is schematically dominant, and a closed immersion by lemma \ref{lemmaSURIETT} since for all $i\in I$ there exists $j\in J$ such that the canonical morphism $\rho_i:\pi_1(X,x)\to G_i$ factors through $G_j$ hence we have a natural morphism $v_i:\underleftarrow{lim}_{j\in J} G_j\to G_j \to G_i$ such that $v_i\circ v= \rho_i$.\endproof

Now, let ${\eta}$ be the generic point of $S$, we
construct $X_{\eta}:=X\times_{S}{\eta}$ that possesses a point
$x_{\eta}\in X_{\eta}({\eta})$ (fiber of $x\in X(S)$). Over
${\eta}$ we can construct the fundamental group scheme
$$\pi_{1}(X_{\eta},x_{\eta}):=\underleftarrow{lim}_{m\in M} F_m,$$  where $M$ is the set of isomorphism classes of objects of $\mathcal{P}(X_{\eta})$, but also the group scheme
$$\pi_{1}(X,x)_{\eta}:=\pi_{1}(X,x)\times_S {\eta}.$$ 

Now we compare the fundamental group scheme
$\pi_{1}(X_{\eta},x_{\eta})$ and the generic fiber
$\pi_{1}(X,x)_{\eta}$ of the fundamental group scheme of $X$.

Direct limits of algebras commute with base change (cf.
\cite{Mats}, Appendix A, Theorem A.1), so the same is true for
inverse limits of affine group schemes. Hence one gets

$$\pi_{1}(X,x)_{\eta}\simeq\underleftarrow{lim}_{j\in J} G_{j,\eta}$$
where $G_{j,\eta}:=
G_j\times_S{{\eta}}$ and $\rho_{j,\eta}:\pi_{1}(X,x)_{\eta}\rightarrow G_{j,\eta}$ denotes the fiber of $\rho_j$. Since $\eta\to S$ is flat and $\rho_j$ is schematically dominant then $\rho_{j,\eta}$ is schematically dominant too. We consider the set $J':=J/\sim$ where for $j_1,j_2\in J$ we set $j_1\sim j_2$ if and only if $\rho_{j_1,\eta}\simeq \rho_{j_2,\eta}$; thus $$(\dagger)\qquad\qquad\qquad\qquad\qquad \pi_{1}(X,x)_{\eta}\simeq\underleftarrow{lim}_{j\in J'} G_{j,\eta}.\qquad\qquad\qquad\qquad\qquad$$

\begin{lemma}\label{lemmaREFEREE} $J'$ is  filtered and there is an injective map $J' \hookrightarrow M$.\end{lemma}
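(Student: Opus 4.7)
I build the injection from the base-change functor and then derive filteredness of $J'$ from that of $J$ established in Proposition \ref{propGRUPPOFONDLINIDIRIDOT}. For any $j\in J$ the triple $(Y_{j,\eta}, G_{j,\eta}, y_{j,\eta})$ is an object of $\mathcal{P}(X_\eta)$ since each of the three conditions of Definition \ref{defTRIPLETS} is preserved by the flat morphism $\eta\to S$; this defines a map $\Phi:J\to M$ by $j\mapsto [(Y_{j,\eta},G_{j,\eta},y_{j,\eta})]$. By Remark \ref{ossTRIPLEeCOPPIE} and the fact that contracted products commute with flat base change, one has canonically $Y_{j,\eta}\simeq \widetilde Y_\eta\times^{\pi_1(X,x)_\eta}G_{j,\eta}$, where $\pi_1(X,x)_\eta$ acts on $G_{j,\eta}$ through $\rho_{j,\eta}$.

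Given a group isomorphism $\beta:G_{j_1,\eta}\xrightarrow{\sim}G_{j_2,\eta}$, the ``twisted identity'' $(t,g)\mapsto (t,\beta(g))$ on the products $\widetilde Y_\eta\times G_{j_i,\eta}$ descends to a pointed morphism of the contracted products if and only if it intertwines the diagonal $\pi_1(X,x)_\eta$-actions, which is precisely the relation $\beta\circ \rho_{j_1,\eta}=\rho_{j_2,\eta}$, i.e.\ $j_1\sim j_2$. In that case the descent is automatically an isomorphism of pointed triples, so $\Phi$ factors through a well-defined map $\overline\Phi:J'\to M$. Conversely, any pointed isomorphism $(\alpha,\beta)$ of triples must coincide with such a twisted-identity descent by rigidity of pointed torsors (the only pointed automorphism of a pointed $G$-torsor is the identity, so at most one pointed iso of pointed torsors compatible with a fixed $\beta$ can exist); hence the existence of $(\alpha,\beta)$ forces the same relation $\beta\circ \rho_{j_1,\eta}=\rho_{j_2,\eta}$, proving that $\overline\Phi$ is injective.

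Finally, $J'$ is filtered: given $[j_1],[j_2]\in J'$, lift to $j_1,j_2\in J$, apply Proposition \ref{propGRUPPOFONDLINIDIRIDOT} to obtain $j_3\in J$ dominating both, and note that base changing the two dominating morphisms yields $[j_3]\in J'$ as a common upper bound of $[j_1]$ and $[j_2]$ in the order inherited from $M$ via $\overline\Phi$. The main technical subtlety lies in the rigidity step used in injectivity: one must translate an abstract isomorphism of pointed triples into the canonical twisted-identity descent, so that its existence really forces the relation $\beta\circ \rho_{j_1,\eta}=\rho_{j_2,\eta}$ rather than the a priori weaker condition coming from composition with the morphism $\varphi:\pi_1(X_\eta,x_\eta)\to \pi_1(X,x)_\eta$.
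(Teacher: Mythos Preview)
Your filteredness argument is correct and is essentially the paper's own proof: lift representatives to $J$, use that $J$ is filtered (Proposition~\ref{propGRUPPOFONDLINIDIRIDOT}), and base-change the dominating morphisms to $\eta$.

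The injectivity step, however, has a genuine gap. Rigidity tells you that for a fixed group isomorphism $\beta$ there is \emph{at most one} pointed $\beta$-equivariant morphism $Y_{j_1,\eta}\to Y_{j_2,\eta}$; it does \emph{not} tell you that the $\alpha$ you are given is the twisted-identity descent. Concretely, write $p_i:\widetilde Y_\eta\to Y_{j_i,\eta}$ for the canonical projections. Then $\alpha\circ p_1$ is pointed and $(\beta\circ\rho_{j_1,\eta})$-equivariant, while $p_2$ is pointed and $\rho_{j_2,\eta}$-equivariant. These are two pointed morphisms $\widetilde Y_\eta\to Y_{j_2,\eta}$, each equivariant for \emph{its own} homomorphism out of $\pi_1(X,x)_\eta$; rigidity only identifies pointed morphisms equivariant for the \emph{same} homomorphism, so you cannot conclude $\alpha\circ p_1=p_2$, and hence cannot conclude $\beta\circ\rho_{j_1,\eta}=\rho_{j_2,\eta}$. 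What the existence of $(\alpha,\beta)$ genuinely gives, via Remark~\ref{ossTRIPLEeCOPPIE} applied over $\eta$, is only the relation $\beta\circ\rho_{j_1,\eta}\circ\varphi=\rho_{j_2,\eta}\circ\varphi$; cancelling $\varphi$ is exactly the schematic dominance asserted in Theorem~\ref{teoremaEVVAI}, which this lemma is meant to feed into. So your injectivity argument is circular as written, and the ``technical subtlety'' you flag is not in fact resolved by the rigidity step.

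It is worth noting that the paper's own proof does not address injectivity either; it only sketches filteredness. For the actual application (invoking Lemma~\ref{lem2.3} inside the proof of Theorem~\ref{teoremaEVVAI}) one only needs that the image of $J'$ in $M$ is filtered and that $\pi_1(X,x)_\eta$ is still the inverse limit over that image, and this does not require $\overline\Phi$ to be injective.
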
 \proof We are given two pointed torsors $(Y_{1,\eta},G_{j_1,\eta},y_{1,\eta})$ and $(Y_{2,\eta},$ $G_{j_1,\eta},y_{2,\eta})$ and morphisms over a third pointed torsor $(Y_{\eta},G_{j,\eta},y_{\eta})$ over $X_{\eta}$ (for $j,j_1,j_2\in J'$) then since $J\subset I$ is filtered there exists a triple $(Y',G',y')$ over $X$ dominating $(Y_1,G_1,y_1)$ and $(Y_2,G_2,y_2)$ whose generic fiber makes the obvious diagram commute.\endproof

As a consequence there exists a
morphism

$$\varphi:\pi_{1}(X_{\eta},x_{\eta})\longrightarrow \pi_{1}(X,x)_{\eta}.$$

We can now state the principal result of this section:

\begin{theorem}\label{teoremaEVVAI}
The morphism $\varphi:\pi_{1}(X_{\eta},x_{\eta})\longrightarrow
\pi_{1}(X,x)_{\eta}$ is faithfully flat.\end{theorem}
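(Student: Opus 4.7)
The plan is to reduce, through the preliminary lemmas of \S\ref{sez:lemmiHOPF}, to a statement about dominatedness being preserved under passage to the generic fiber, and then to establish that statement by schematic closure together with fpqc descent. First, since $\pi_1(X_\eta,x_\eta)$ and $\pi_1(X,x)_\eta$ are affine group schemes over the field $\eta$, remark \ref{ossQUOZWW} turns faithful flatness of $\varphi$ into schematic dominance. The presentation $(\dagger)$ together with lemma \ref{lemmaREFEREE} realises $\pi_1(X,x)_\eta=\underleftarrow{lim}_{j\in J'}G_{j,\eta}$ as an inverse limit indexed by a filtered subset of $M$ in which each structural morphism $\rho_{j,\eta}$ is schematically dominant (as the flat base change of $\rho_j$). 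Lemma \ref{lem2.3} then reduces the theorem to the assertion that, for each $j\in J'$, the composition $\rho_{j,\eta}\circ\varphi:\pi_1(X_\eta,x_\eta)\to G_{j,\eta}$ is schematically dominant. By remark \ref{ossTRIPLEeCOPPIE} and corollary \ref{corollRIDOTTO}, this composition is the morphism associated with the pulled-back triple $(Y_{j,\eta},G_{j,\eta},y_{j,\eta})\in\mathcal{P}(X_\eta)$, and schematic dominance amounts to saying that this triple is dominated over $X_\eta$. Thus the whole theorem reduces to showing that the generic fiber of a dominated triple over $X$ is a dominated triple over $X_\eta$.

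To prove this, I would fix a dominated triple $(Y,G,y)$ and any morphism $(\alpha,\beta):(W,H,w)\to(Y_\eta,G_\eta,y_\eta)$ of triples over $X_\eta$, and aim to show that $\beta$ is schematically dominant. Factoring $\beta$ via lemma \ref{lemFATTORIZZAGRUPPI} and using lemma \ref{lemmadelTERZOCAP} to replace $W$ by the contracted product with the image, one reduces to the case in which $\beta:H\hookrightarrow G_\eta$ and $\alpha:W\hookrightarrow Y_\eta$ are closed immersions, and the goal becomes the equality $H=G_\eta$. To attack this, I plan to extend by schematic closure: let $\tilde H$ be the schematic closure of $H$ in $G$, which is a closed $S$-flat, hence finite flat, subgroup scheme of $G$ by the standard argument on subgroups of flat affine group schemes over a Dedekind base, and let $\tilde W$ be the schematic closure of $W$ in $Y$. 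The universal property of schematic closure, applied to morphisms from $S$-flat sources, forces the $G$-action on $Y$ to restrict to an $\tilde H$-action on $\tilde W$ and makes $y$ factor through $\tilde W$, providing a candidate section $\tilde w$.

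The chief obstacle, in my expectation, is verifying that $\tilde W\to X$ is genuinely an fpqc $\tilde H$-torsor: \emph{a priori} the schematic closure of a subscheme of an $S$-flat scheme need not be flat over $X$. My strategy here is fpqc descent. Pulling back along the faithfully flat morphism $Y\to X$ trivialises the $G$-torsor, so that $Y\times_X Y\simeq G\times Y$ as $Y$-schemes with $G$-action; in this presentation the inclusion $W\hookrightarrow Y_\eta$ becomes the base change of $H\hookrightarrow G_\eta$. Since the formation of schematic closure commutes with flat base change, $\tilde W\times_X Y$ identifies with the trivial $\tilde H$-torsor $\tilde H\times Y$, and descent produces the desired $\tilde H$-torsor structure on $\tilde W\to X$. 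Granting this, we obtain a triple $(\tilde W,\tilde H,\tilde w)\in\mathcal{P}(X)$ together with a morphism to $(Y,G,y)$; the dominatedness of the latter forces the closed immersion $\tilde H\hookrightarrow G$ to be schematically dominant, hence an isomorphism, and passing back to generic fibers yields $H=G_\eta$, which is what was needed.
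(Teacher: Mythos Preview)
Your overall strategy coincides with the paper's: reduce via lemma~\ref{lem2.3} to showing that each $q_j=\rho_{j,\eta}\circ\varphi$ is schematically dominant, rephrase this as ``dominated triples stay dominated on the generic fiber'', and attack it by spreading out a closed subtriple $(W,H,w)\hookrightarrow(Y_\eta,G_\eta,y_\eta)$ via schematic closure inside $(Y,G,y)$. The only problem is in the step where you verify that $\tilde W\to X$ is an $\tilde H$-torsor.

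The claim that, under the trivialization $Y\times_X Y\simeq G\times Y$, the inclusion $W\hookrightarrow Y_\eta$ becomes the base change of $H\hookrightarrow G_\eta$ is false. In the trivialization $(a,b)\mapsto(g,b)$ with $a=g\cdot b$, the fiber of $W\times_{X_\eta}Y_\eta\subset G_\eta\times Y_\eta$ over a point $b$ is the coset $\{g\in G_\eta:g\cdot b\in W\}$, a translate of $H$ that genuinely varies with $b$. Equivalently, the $H$-torsor $W$ need not trivialize after pullback along $Y_\eta\to X_\eta$: you have a map $W\to Y_\eta$ over $X_\eta$, not $Y_\eta\to W$, so there is no section. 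Hence the identification $\tilde W\times_X Y\simeq\tilde H\times Y$ is unjustified, and the fpqc descent built on it does not go through.

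The paper sidesteps this by never trying to trivialize over $Y$. Instead it uses the torsor isomorphism for $W$ itself, $H\times W\simeq W\times_{X_\eta}W$, viewed inside $G\times Y\simeq Y\times_X Y$. Because schematic closure over a Dedekind base is functorial and commutes with fiber products (\cite{EGAIV-2}, (2.8.3) and Corollaire~2.8.6), taking closures gives $\tilde H\times\tilde W\simeq\tilde W\times_X\tilde W$, which is exactly the torsor condition for $\tilde W\to X$. This is lemma~\ref{lemmaLACHIUSURATRSORE}; substituting it for your descent paragraph closes the gap, and the rest of your argument then goes through unchanged.
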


As a first application of this result we give a non trivial example where the fundamental group scheme is trivial. Thus in particular  $\pi_1(\mathbb{P}^1_S,x)$ is trivial for
$S$ a connected Dedekind scheme and $x\in \mathbb{P}^1_S(S)$: 

\begin{example}\label{esempipPI1}
Let $S$ be a connected
Dedekind scheme, $X$ an integral scheme, faithfully 
flat over $S$ and $x : S \to X$ a section.
If the generic fiber of $X$ is a complete normal rational variety, then $\pi_1(X,x)$ is trivial\end{example}\proof Let $\eta$
be the generic point of $S$. That
$\pi_1(X_{\eta},x_{\eta})$ is trivial follows by \cite{No2}, Ch II, Proposition 9 and its corollary). The fundamental group scheme $\pi_1(X,x)$ is flat over $S$ and has trivial generic fiber then it coincides with the scheme theoretic closure of $\{1\}_{\eta}$ in $\pi_1(X,x)$, which is then trivial. 
\endproof

\begin{proof} [Proof of Theorem] \ref{teoremaEVVAI}. We prove that 
\begin{displaymath}
\varphi:\pi_{1}(X_{\eta},x_{\eta})\longrightarrow
\pi_{1}(X,x)_{\eta}
\end{displaymath}
is a schematically dominant morphism. For any $j\in J'$ we consider the
following commutative diagram:
$$\xymatrix{\pi_1(X_{\eta},x_{\eta}) \ar[r]^{\varphi}  \ar[dr]_{q_j} & \pi_1(X,x)_{\eta} \ar[d]^{\rho_{j,\eta}}\\
  & G_{j,\eta}   } $$

\noindent According to lemma \ref{lem2.3} 
since $\rho_{j,\eta}:\pi_1(X,x)_{\eta}\to
G_{j,\eta}$ is schematically dominant for any $j\in J'$ (which is filtered in $M$, cf. lemma \ref{lemmaREFEREE}), it is sufficient to prove that for
all $j\in J'$ the morphism
$q_j:\pi_1(X_{\eta},x_{\eta})\rightarrow G_{j,\eta}$ is schematically dominant
too.  By lemma \ref{lemFATTORIZZAGRUPPI} we split
$q_j:\pi_1(X_{\eta},x_{\eta})\rightarrow G_{j,\eta}$ into a schematically dominant morphism followed by a closed immersion:

$$
q_j:\pi_1(X_{\eta},x_{\eta}) \to G \hookrightarrow
G_{j,\eta}.
$$
where $G$ is a finite $K$-group scheme. 

Let $(Y',G,y')$ be the dominated pointed torsor associated  to $\pi_1(X_{\eta},x_{\eta})\to G$ and let $(Y,G_{j,\eta},y)$ be the  pointed torsor associated to $q_j:\pi_1(X_{\eta},x_{\eta})\to G_{j,\eta}$. The latter is isomorphic to the contracted product (cf. lemma
\ref{lemmadelTERZOCAP}) $$Y\simeq Y'\times^{G}G_{j,\eta}$$
 via the morphism
$f:G\hookrightarrow G_{j,\eta}$ , and $y$ is the image in $Y$ of $y'$.
We immediately observe that the canonical morphism  $f':Y'\rightarrow Y$ is a closed
immersion. Indeed locally, for the
\textit{fpqc} topology, it is certainly true since locally any
torsor is trivial. We deduce
that the result
is also true globally since being a closed immersion is a local
property for the
\textit{fpqc} topology (\cite{EGAIV-2}, Proposition 2.7.1.).
By construction there exist a finite and flat $S$-group scheme
$G_j$ and a dominated triple $(P,G_j,p)$ over $X$  such that
$(Y,G_{j,\eta},y)$ is its generic fiber. The following diagram
describes the present situation:
$$\xymatrix{
  (Y',G,y')\ar[d]   &  \\   
  (Y,G_{j,\eta},y)\ar[d]\ar[r]   & (P , G_j, p)\ar[d] \\
    X_{\eta} \ar[r]\ar[d]& X\ar[d]   \\   
    {\eta}\ar[r]  & S 
}
$$

According to \cite{EGAIV-2}, proposition 2.8.5, there is a unique
 $S$-group scheme $H$, closed  subgroup scheme
of $G_j$ which is flat over   $S$ and such that
$H\times_{S}{\eta}\simeq G$: it's the scheme theoretic closure
of $G$ in $G_j$. Similarly we construct   $Q$, the only closed
subscheme of $P$ which is flat over   $S$ and such that
$Q\times_{S}{\eta}\simeq  Y'$. Again we construct the section $q:S\rightarrow Q$ as the  scheme theoretic closure of $y'$ in $p$. We have the following

\begin{lemma}\label{lemmaLACHIUSURATRSORE} $(Q,H,q)$ is a pointed torsor over $S$.\end{lemma}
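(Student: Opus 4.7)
The plan is to verify that $(Q,H,q)$ satisfies the three conditions of Definition \ref{defTRIPLETS}: $H$ is a finite flat $S$-group scheme, $f:Q\to X$ is an $H$-torsor for the \textit{fpqc} topology, and $q:S\to Q$ is a section above $x$. The unifying principle throughout is that, in an $S$-flat ambient scheme, any closed $S$-flat subscheme is the scheme-theoretic closure of its generic fibre; consequently, two such subschemes with the same generic fibre must coincide.

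For the group scheme structure on $H$, the $H$-action on $Q$, and the existence of the section $q$: $H$ is $S$-flat by construction and finite, being closed in the finite $G_j$. To see that $H$ is a subgroup scheme, I apply the closure principle to $H\times_S H$: the multiplication of $G_j$ sends its generic fibre $G\times_\eta G$ into $G\subseteq G_{j,\eta}$, hence sends the $S$-flat $H\times_S H$ into the closure $H$ of $G$ in $G_j$; the inverse and identity section are handled identically. The same reasoning applied to the action $G_j\times_S P\to P$ sends $H\times_S Q$ into the closure $Q$ of $Y'$ in $P$, giving an $H$-action on $Q$. Finally, the section $p:S\to P$ is a closed immersion with $S$-flat image meeting $Q_\eta=Y'$ at $y'$, so it factors through $Q$, yielding $q$ with $f(q)=x$.

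The substantive part is verifying that $f:Q\to X$ is an $H$-torsor. I would argue \textit{fpqc}-locally on $X$. Pick any \textit{fpqc} cover $U\to X$ trivializing $P$, so that $P\times_X U\cong G_j\times_S U$ as $G_j$-torsors over $U$. Then $Q\times_X U$ is $S$-flat, because $Q\times_X U\to Q$ is the \textit{fpqc} base change of $U\to X$ and $Q\to S$ is flat; under the trivialization it identifies with the scheme-theoretic closure in $G_j\times_S U$ of a sub-$G$-torsor $\Sigma\subseteq G_{j,\eta}\times_\eta U_\eta$. A further \textit{fpqc} base change $V\to U$ trivializing $\Sigma$ as a $G$-torsor over $U_\eta$ makes the generic fibre of $Q\times_X V$ equal to $G\times_\eta V_\eta$, whose $S$-flat closure in $G_j\times_S V$ is exactly $H\times_S V$. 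The closure principle then gives $Q\times_X V\cong H\times_S V$ as $H$-schemes over $V$, so $f$ becomes a trivial $H$-torsor after pulling back along $V\to X$; \textit{fpqc} descent for torsors under the affine group scheme $H$ delivers that $f$ itself is an $H$-torsor.

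The main obstacle will be constructing the auxiliary \textit{fpqc} cover $V\to U$ over $S$ that trivializes $\Sigma$ (not merely over $\eta$) and verifying that scheme-theoretic closures commute with the base change from $U$ to $V$. Both points reduce to the standard fact that scheme-theoretic closure behaves well under flat base change on $S$-flat schemes.
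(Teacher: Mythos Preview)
Your treatment of the group structure on $H$, the $H$-action on $Q$, and the section $q$ matches the paper's (both use functoriality of scheme-theoretic closure over a Dedekind base). The divergence is in the torsor condition. The paper checks it directly: scheme-theoretic closure over $S$ commutes with fibre products (\cite{EGAIV-2}, (2.8.3) and Corollaire 2.8.6), so the isomorphism $G\times_\eta Y'\simeq Y'\times_{X_\eta}Y'$ sitting inside $G_{j,\eta}\times_\eta Y\simeq Y\times_{X_\eta}Y$ passes to closures in $G_j\times_S P\simeq P\times_X P$, yielding $H\times_S Q\simeq Q\times_X Q$ with no auxiliary cover at all.

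Your local-trivialisation route has a genuine gap. You need an \textit{fpqc} morphism $V\to U$ whose generic fibre trivialises the $G$-torsor $\Sigma$, and you assert that this ``reduces to'' compatibility of closure with flat base change. It does not: that compatibility lets you transport closures along a \emph{given} flat $V\to U$, but it manufactures no such $V$ from data living only over $\eta$. The only natural candidate---the scheme-theoretic closure of $\Sigma$ in $G_j\times_S U$---is precisely $Q\times_X U$, so asking it to be flat over $U$ is asking $Q\to X$ to be flat, which is part of what you are trying to prove. In general there is no mechanism for extending a finite flat cover of $U_\eta$ to an \textit{fpqc} cover of $U$ without already knowing something equivalent to the conclusion. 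The remedy is to abandon local trivialisation and verify $H\times_S Q\simeq Q\times_X Q$ directly via the fibre-product compatibility of scheme-theoretic closure, as the paper does.
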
  \proof  (this is lemma 2.2 of  \cite{GAS} whose proof will be sketched here for the comfort of the reader)
  The scheme theoretic closure of closed subschemes of the generic fibre is fonctorial and
commutes with fiber products (cf. \cite{EGAIV-2}, (2.8.3) and Corollaire
2.8.6) so in particular from diagram
$$\xymatrix{ G\times Y' \ar[d] \ar[rr]^{\qquad action}& & Y'\ar[d]\\ G_{j,\eta}\times Y \ar[rr]^{\qquad action}& & Y} $$
we deduce the commutative diagram
$$\xymatrix{ H\times Q \ar[d] \ar[rr]^{\qquad action}& & Q\ar[d]\\ G_j\times P \ar[rr]^{\qquad action}& & P} $$
hence an action $H\times Q\to Q$. The isomorphism  $G\times
Y'\simeq Y'\times_{X_{\eta}} Y'$ implies the isomorphism
$\overline{G}\times \overline{Y'}\simeq
\overline{Y'\times_{X_{\eta}} Y'}$; the latter is the only closed subscheme of  $P\times_{X} P$, flat over $S$ and whose generic fiber is isomorphic to $Y'\times_{X_{\eta}} Y'$. Since the same properties are satisfied by $Q\times_{X} Q$ then  $\overline{Y'\times_{X_{\eta}} Y'} \simeq Q\times_{X} Q$ and consequently
 $H\times Q\simeq Q\times_{X} Q$ and $Q$ is a $H$-torsor over $X$. With similar remarks we get the desired pointed torsor considering the section $q:S\rightarrow Q$.\endproof

Thus we have a commutative diagram of triples:

$$\xymatrix{
  (Y',G,y')\ar[d]\ar[r]   & (Q,H,q)\ar[d] \\   
  (Y',G_{j,\eta},y')\ar[d]\ar[r]   & (P , G_j, p)\ar[d] \\
    X_{\eta} \ar[r]\ar[d]& X\ar[d]   \\   
    {\eta}\ar[r]  & S 
}
$$
\newline
where the morphism $H\to G_j$ is by construction a closed immersion. But $(P,G_j,p)$ is a dominated pointed triple hence by definition the morphism $H\to G_j$ is also schematically dominant then an isomorphism. So the same is true for the morphism $G\to G_{j,\eta}$, which proves that  $q_j:\pi_{1}(X_{\eta},x_{\eta})\to
G_{j,\eta}$ was already schematically dominant and this is enough to conclude.\end{proof}

Proposition \ref{PropVarAb} will provide an example where the morphism $$\varphi:\pi_{1}(X_{\eta},x_{\eta})\to
\pi_{1}(X,x)_{\eta}$$ is actually an isomorphism. 

\section{Applications}
\subsection{Extension of torsors}
Now we  apply theorem \ref{teoremaEVVAI} to the problem of
extending  torsors or, more precisely, we explain
how the kernel  $N$ of the morphism
$\varphi:\pi_{1}(X_{\eta},x_{\eta})\to \pi_{1}(X,x)_{\eta}$
measures the obstruction to extending a torsor over $X_{\eta}$
under the action of a finite group scheme to a
torsor over  $X$ under the action of a finite and flat $S$-group
scheme; we fix some notations for this section:

\begin{notation}\label{notazIPOTESIFINALI}

From now on $S$ will be a connected Dedekind scheme, $\eta:=Spec(K)$ its
function field, $X$ an integral scheme and $j:X\rightarrow S$ a faithfully flat morphism. We
fix a section $x:S\rightarrow X$. We set
$N:=ker(\varphi)$ where
$\varphi:\pi_{1}(X_{\eta},x_{\eta})\longrightarrow
\pi_{1}(X,x)_{\eta}$ is the canonical morphism already described.
\end{notation}

We prove the following 

\begin{theorem}\label{teo2}
Let  $G$ be a finite group scheme over $K$,
$\rho:\pi_{1}(X_{\eta},x_{\eta})\to G$ a  schematically dominant
morphism of $K$-group
schemes and $(Y,G,y)$ the associated dominated
pointed torsor. Then there exists a pointed torsor  $(Y',G',y')$ over $X$, with $G'$ a finite 
flat $S$-group scheme, whose generic fiber is isomorphic to $(Y,G,y)$ if
and only if
$N<ker(\rho)$.
\end{theorem}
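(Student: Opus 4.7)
The idea is to pass through the dictionary of Remark \ref{ossTRIPLEeCOPPIE}, translating the existence of an extension into a factorization $\rho=\tilde\rho\circ\varphi$ of $\rho$ through $\varphi$, and then using Theorem \ref{teoremaEVVAI} together with a scheme\-theoretic closure argument (as in the proof of Theorem \ref{teoremaEVVAI}) to turn such a factorization into a genuine extension.

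For necessity, suppose an extension $(Y',G',y')$ exists. By Remark \ref{ossTRIPLEeCOPPIE} it corresponds to a morphism $\rho':\pi_{1}(X,x)\to G'$; its generic fibre gives $\rho'_\eta:\pi_{1}(X,x)_\eta\to G'_\eta\simeq G$. The pointed torsor over $X_\eta$ associated to $\rho'_\eta\circ\varphi$ is by construction the generic fibre of $(Y',G',y')$, namely $(Y,G,y)$, which is also the torsor associated to $\rho$. By the bijection of Remark \ref{ossTRIPLEeCOPPIE} applied to $X_\eta$, this forces $\rho=\rho'_\eta\circ\varphi$, whence $N=\ker(\varphi)<\ker(\rho)$.

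For sufficiency, assume $N<\ker(\rho)$. Since $\varphi$ is faithfully flat with kernel $N$ (Theorem \ref{teoremaEVVAI}), we have $\pi_{1}(X,x)_\eta\simeq\pi_{1}(X_\eta,x_\eta)/N$, so $\rho$ descends to a morphism $\tilde\rho:\pi_{1}(X,x)_\eta\to G$ with $\tilde\rho\circ\varphi=\rho$. Because $\rho$ is schematically dominant and $\varphi$ is faithfully flat, $\tilde\rho$ is schematically dominant as well, hence faithfully flat by Remark \ref{ossQUOZWW}. Using the description $(\dagger)$ of $\pi_{1}(X,x)_\eta$ as $\underleftarrow{lim}_{j\in J'}G_{j,\eta}$ together with the finiteness of $G$ (so that its coordinate ring is finite-dimensional over $K$), $\tilde\rho$ factors through some $\sigma:G_{j_0,\eta}\to G$ for a suitable $j_0\in J'$; since $\tilde\rho$ is faithfully flat, so is $\sigma$.

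Set $K_0:=\ker(\sigma)$, so $G\simeq G_{j_0,\eta}/K_0$. Let $\overline{K_0}$ be the scheme-theoretic closure of $K_0$ in the finite flat $S$-group scheme $G_{j_0}$: by \cite{EGAIV-2}, Proposition 2.8.5 it is flat over $S$ with generic fibre $K_0$, and the functoriality of scheme-theoretic closure used in Lemma \ref{lemmaLACHIUSURATRSORE} makes it a closed subgroup scheme of $G_{j_0}$. The quotient $G':=G_{j_0}/\overline{K_0}$ is then a finite flat $S$-group scheme with $G'_\eta\simeq G$. The composition $\pi_{1}(X,x)\to G_{j_0}\to G'$ defines, via Remark \ref{ossTRIPLEeCOPPIE}, a pointed torsor $(Y',G',y')$ over $X$ (concretely $Y'\simeq\widetilde{Y}\times^{\pi_{1}(X,x)}G'$). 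Using the identification $\widetilde{Y}_\eta\simeq\widetilde{Y_\eta}\times^{\pi_{1}(X_\eta,x_\eta)}\pi_{1}(X,x)_\eta$ (a consequence of Theorem \ref{teoremaEVVAI} and the bijection of Remark \ref{ossTRIPLEeCOPPIE}), one checks that the generic fibre $(Y'_\eta,G'_\eta,y'_\eta)$ corresponds to $\tilde\rho\circ\varphi=\rho$, hence is isomorphic to $(Y,G,y)$. The delicate step is the construction of the $S$-model $G'$ of $G$: it relies on factoring $\tilde\rho$ through a finite level $G_{j_0,\eta}$ and on verifying that the schematic closure of its kernel is a subgroup scheme, both of which mirror the closure technique already deployed in the proof of Theorem \ref{teoremaEVVAI}.
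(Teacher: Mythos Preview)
Your proof follows essentially the same route as the paper: factor $\rho$ through $\varphi$ using Theorem \ref{teoremaEVVAI}, descend to a finite stage $G_{j_0,\eta}$ by finiteness of $G$, take the schematic closure of the kernel in $G_{j_0}$, and pass to the quotient. The only points you should make explicit are that $\overline{K_0}$ is \emph{normal} in $G_{j_0}$ (needed for $G'$ to carry a group structure; the paper cites \cite{ANA}, remarque 1.2.5~d), though it also follows from the closure-functoriality you invoke, applied to the conjugation map) and that the quotient $G_{j_0}/\overline{K_0}$ is representable by a finite flat $S$-group scheme (the paper appeals to Theorem \ref{teoRappre1} and Proposition \ref{propRUBATAadSGA} here).
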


A consequence of theorem \ref{teo2} (and lemma \ref{lemmaSURIETT}) is the following

\begin{corollary}\label{corollBELLOeFINALE} Any dominated triple over $X_{\eta}$ can be extended to a
(dominated) triple over $X$ if and only if
$\varphi:\pi_{1}(X_{\eta},x_{\eta})\to
\pi_{1}(X,x)_{\eta}$ is an isomorphism.\end{corollary}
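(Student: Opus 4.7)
The plan is to derive both directions formally from Theorem \ref{teo2} and Lemma \ref{lemmaSURIETT}; no further geometric input is needed.

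For the ``if'' direction, assume $\varphi$ is an isomorphism, so $N=\ker(\varphi)$ is trivial. Then the condition $N<\ker(\rho)$ of Theorem \ref{teo2} is vacuously satisfied for every schematically dominant $\rho:\pi_1(X_\eta,x_\eta)\to G$, and so every dominated pointed torsor over $X_\eta$ admits an extension to some pointed torsor $(Y',G',y')$ over $X$. To refine this to a \emph{dominated} extension, I would apply Lemma \ref{lemmaSCIAPO} to the morphism $\rho':\pi_1(X,x)\to G'$ associated with $(Y',G',y')$, obtaining a schematically dominant $\rho'':\pi_1(X,x)\to G''$ and a closed immersion $G''\hookrightarrow G'$. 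The relation $\rho=\rho'_\eta\circ\varphi$ combined with the schematic dominance of $\rho$ forces the resulting closed immersion $G''_\eta\hookrightarrow G$ to be an isomorphism; by functoriality of the Remark \ref{ossTRIPLEeCOPPIE} bijection under base change, the generic fibre of the dominated triple associated with $\rho''$ is then again $(Y,G,y)$.

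For the converse, assume every dominated pointed torsor over $X_\eta$ admits a dominated extension over $X$. The plan is to apply Lemma \ref{lemmaSURIETT} with $I=M$, $G=\pi_1(X_\eta,x_\eta)=\underleftarrow{lim}_{m\in M}F_m$, $J=J'\hookrightarrow M$ the filtered subset from Lemma \ref{lemmaREFEREE}, $G'=\pi_1(X,x)_\eta=\underleftarrow{lim}_{j\in J'}G_{j,\eta}$, and the canonical morphism $\varphi:G\to G'$. For each $m\in M$ the assumed dominated extension $(Y'_m,G'_m,y'_m)$ of $(Y_m,F_m,y_m)$ yields, via Remark \ref{ossTRIPLEeCOPPIE}, a morphism $\pi_1(X,x)\to G'_m$; its generic fibre composed with the identification $G'_{m,\eta}\simeq F_m$ gives the required $\gamma_m:\pi_1(X,x)_\eta\to F_m$. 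Since every $q_m:\pi_1(X_\eta,x_\eta)\to F_m$ is schematically dominant by Corollary \ref{corollRIDOTTO}, the second clause of Lemma \ref{lemmaSURIETT} then forces $\varphi$ to be an isomorphism.

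The main obstacle is a bookkeeping step rather than a substantive difficulty: one must verify the commutativity $\gamma_m\circ\varphi=q_m$ that feeds into Lemma \ref{lemmaSURIETT}. I would carry this out by tracing through the construction of $\varphi$ from $(\dagger)$ together with Remark \ref{ossTRIPLEeCOPPIE}: both $\gamma_m\circ\varphi$ and $q_m$ classify the same dominated pointed torsor $(Y_m,F_m,y_m)$ under the bijection of that remark applied to $X_\eta$, and so they must coincide.
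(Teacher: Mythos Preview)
Your argument is exactly what the paper has in mind: the text merely records that the corollary is ``a consequence of theorem \ref{teo2} (and lemma \ref{lemmaSURIETT})'', and you have supplied both directions along those lines, including the refinement via Lemma \ref{lemmaSCIAPO} that makes the extension dominated.

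One small slip in the converse: you take $I=M$, but $M$ indexes \emph{all} triples of $\mathcal{P}(X_\eta)$, so neither ``the assumed dominated extension of $(Y_m,F_m,y_m)$'' nor ``every $q_m$ is schematically dominant by Corollary \ref{corollRIDOTTO}'' is valid for arbitrary $m\in M$. The fix is immediate: replace $M$ by its subset indexing dominated triples, which still has projective limit $\pi_1(X_\eta,x_\eta)$ by Proposition \ref{propGRUPPOFONDLINIDIRIDOT}, and observe that $J'$ lands in this subset because each $\rho_{j,\eta}$ is schematically dominant. With that adjustment your application of Lemma \ref{lemmaSURIETT} goes through verbatim.
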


\begin{proof} [Proof of Theorem] \ref{teo2}. One direction  is simple: assume in fact that there exists a triple
$(Y',G',y')$ over $X$ whose generic fiber is isomorphic
to $(Y,G,y)$. This means that there exists a morphism
$\rho':\pi_{1}(X,x)\rightarrow G'$ whose generic fiber
$\rho'_{\eta}:\pi_{1}(X,x)_{\eta}\rightarrow G'\times_S
\eta\simeq G$ satisfies $\rho'_{\eta}\circ \varphi =\rho $,  that is
the following diagram commutes:

$$\xymatrix{\pi_{1}(X_{\eta},x_{\eta})\ar[r]^{\varphi} \ar[dr]_{\rho} & \pi_{1}(X,x)_{\eta} \ar[d]^{\rho'_{\eta}} \ar[r]&
\pi_1(X,x)\ar[d]^{\rho'}\\
& G \ar[r] & G'\\ } $$ \noindent The existence of   $\rho'_{\eta}$ is equivalent (cf. \cite{WW}, Ch. 15, Theorem 15.4) to the condition $N<ker(\rho)$. Now, suppose that the
condition  $N<ker(\rho)$ holds, then there exists a schematically dominant morphism
$\gamma:\pi_{1}(X,x)_{\eta}\to G$ such that
$\gamma\circ \varphi = \rho $, that is the following diagram
commutes:

$$\xymatrix{\pi_{1}(X_{\eta},x_{\eta})\ar[r]^{\varphi} \ar[dr]_{\rho} & \pi_{1}(X,x)_{\eta} \ar[d]^{\gamma} \\
 & G  \\ } $$
\noindent   We recall that $\pi_{1}(X,x)_{\eta}\simeq \underleftarrow{lim}_{j\in J'}
G_{j,\eta}$ where
$\rho_{j,\eta}:\pi_{1}(X,x)_{\eta}\to G_{j,\eta}$ is schematically dominant for all $j\in J'$ (cf. isomorphism $(\dagger)$). Since to quotient
$\pi_{1}(X,x)_{\eta}$, which is often not of finite type,  by  $N$
 could be a problem we first need the following 

\begin{lemma}\label{lemma2punto4}
There exists $j\in J'$ such that $\gamma$ factors
through $G_{j,\eta}$, i.e. there exists a morphism $\gamma_j: {G_j}_{\eta}  \to G$ such that the following diagram

$$\xymatrix{\pi_{1}(X,x)_{\eta}\ar[r]^{{\rho_j}_{\eta}} \ar[d]_{\gamma} & {G_j}_{\eta} \ar[dl]^{\gamma_j} \\ G &
}$$ commutes. \end{lemma}
\proof 
This follows directly from the finiteness of $G$.
\endproof

If $G$ is any $S$-group scheme and $H$ a closed subgroup scheme of $G$ we denote by $G/H_{(fpqc)}$  the sheaf associated, with respect to the  fpqc topology, to the functor $$T\mapsto G(T)/H(T)$$ from the category of schemes over $S$ to the category of sets. If $G/H_{(fpqc)}$ is represented by a $S$-scheme we denote it by $G/H$.

\begin{proposition}\label{propRUBATAadSGA}
Let $G$ and $H$ be two $S$-group schemes (here $S$ need not be a Dedekind scheme), $H\hookrightarrow G$ a closed immersion and assume that  $G/H_{(fpqc)}$ is represented by a scheme $G/H$, then:

\begin{enumerate}
  \item if $H$ is a normal closed subgroup scheme of
   $G$ then on $G/H$ there exists a unique structure of $S$-group scheme such that the canonical morphism $p:G\to G/H$ is a morphism of
   $S$-group schemes.
  \item Let $T$ be any $S$-scheme and set $G_T:=G\times_S T$
  and $H_T:=H\times_S T$. Then ${G_T/H_T}_{(fpqc)}$ is represented by the
  $T$-scheme  $(G/H)\times_S T$.
  \item  the canonical morphism $p:G\to G/H$ is faithfully flat if and only if $H$ is flat over $S$.
\end{enumerate}\end{proposition}
\proof Cf.  \cite{BER},
 Proposition 9.2 (resp.) (iv), (v) and (xi).\endproof

Now we recall a particular case of \cite{Gab}, Th\'eor\`eme 7.1  which fits to our situation since finite implies projective (cf. \cite{EGAII} Corollaire 6.1.11):

\begin{theorem}\label{teoRappre1}
Let $S$ be any connected scheme. Let $G$ be a $S$-group scheme of finite type and let $H$ be a closed subgroup scheme of $G$, proper and flat over $S$. If $G$ is quasi projective over $S$ then  $G/H_{(fpqc)}$ is representable.
\end{theorem}

Now we come back to the proof of theorem \ref{teo2}. The morphism
$\gamma_j:G_{j,\eta}\rightarrow G$ from lemma \ref{lemma2punto4} is schematically dominant since $\gamma$ is.
We set
\begin{displaymath}
N_1:=ker(\gamma_j)
\end{displaymath}
which is a closed subgroup scheme of $N_1$. According to
\cite{EGAIV-2}, Proposition 2.8.5, we construct the scheme
theoretic closure of $N_1$ in $G_j$, that is an $S$-scheme $N_2$
which is the only closed subgroup scheme of $G_j$ flat over $S$
whose fibre is isomorphic to $G_{j,\eta}$. Moreover, according to \cite{ANA},
remarque 1.2.5. $d)$, $N_2$ is normal in $G_j$. Let us denote by $G'$ the $S$-quotient scheme 
$G_j/N_2$. Moreover, according to proposition \ref{propRUBATAadSGA},
ii) there is an isomorphism $G\simeq G_{j,\eta}/N_1\simeq
G'\times_{S}{\eta}$.  Then we have the
following commutative diagram:

 $$\xymatrix{N_1\ar[r] \ar@{^{(}->}[d]& N_2 \ar@{^{(}->}[d]\\
 {G_j}_{\eta}\ar[d]_{\gamma_j} \ar[r] & {G_j}\ar[d]^{\gamma_j'}\\
 G\ar[r]\ar[d] & G'\ar[d]\\
 {\eta}\ar[r] & S}$$
\noindent where we have denoted by $\gamma_j'$ the morphism $G_j
\rightarrow G'$; we compose it with
$\rho_j:\pi_{1}(X,x)\rightarrow G_j$ in order to obtain a
morphism $\gamma_j'\circ\rho_j:\pi_{1}(X,x)\rightarrow G'$ to
which we associate the triple $(Y',G',y')$ (cf. rem.
\ref{ossTRIPLEeCOPPIE}) which is the desired triple. This
concludes the proof of theorem \ref{teo2}.\end{proof}

Now we explain how to extend torsors if they are related to other torsors that we know to be extensible. The proof of lemma \ref{lemmaVarAb} is similar to that of theorem \ref{teo2}, so we only sketch it.

\begin{lemma}\label{lemmaVarAb} Let $(Y,G,y)\in \mathcal{P}(X)$  and $(Y_{\eta},G_{\eta},y_{\eta})$ its generic fiber. Let $H'$ be a $K$-group, $u:G_{\eta}\to H'$ a faithfully flat morphism and $(Z',H',z')$  the associated object of $\mathcal{P}(X_{\eta})$. Then there exists a triple $(Z,H,z)\in \mathcal{P}(X)$ whose generic fiber is isomorphic to $(Z',H',z')$.\end{lemma}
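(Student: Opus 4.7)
The plan is to mimic the structure of the proof of Theorem \ref{teo2}. Since $u:G_\eta\to H'$ is a faithfully flat morphism of finite $K$-group schemes, setting $N_1:=\ker(u)$ we have a canonical isomorphism $H'\simeq G_\eta/N_1$. So the problem reduces to spreading the subgroup $N_1\subset G_\eta$ to an $S$-flat, normal closed subgroup scheme of $G$, forming the quotient, and then pushing $Y$ forward via a contracted product.

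First I would take $N_2$ to be the scheme theoretic closure of $N_1$ in $G$. By \cite{EGAIV-2}, Proposition 2.8.5 this is the unique $S$-flat closed subscheme of $G$ whose generic fibre is $N_1$. Exactly as in the proof of Lemma \ref{lemmaLACHIUSURATRSORE}, the functoriality of scheme theoretic closure and its compatibility with fibre products (\cite{EGAIV-2}, (2.8.3) and Corollaire 2.8.6) upgrades $N_2$ to a closed subgroup scheme of $G$; normality is then obtained from the closure of the (trivial) inner automorphism action, which is the content of \cite{ANA}, remarque 1.2.5 $d)$ used already in the proof of Theorem \ref{teo2}.

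Next I would form $H:=G/N_2$. Here $G$ is finite over $S$, hence quasi-projective, and $N_2\subset G$ is closed, flat and proper over $S$, so Theorem \ref{teoRappre1} guarantees that $H$ exists as an $S$-scheme. By Proposition \ref{propRUBATAadSGA} $(i)$ it inherits a unique structure of $S$-group scheme such that $p:G\to H$ is a morphism of group schemes, by $(iii)$ the morphism $p$ is faithfully flat (so $H$ is finite and flat over $S$, being the faithfully flat image of a finite flat $S$-scheme), and by $(ii)$ formation of this quotient commutes with base change, giving $H_\eta\simeq G_\eta/N_1\simeq H'$ canonically.

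Finally I would define $Z:=Y\times^{G}H$ as the contracted product along $p:G\to H$, with $z$ the image of $(y,e_H)$ under the quotient map $Y\times H\to Z$. Then $(Z,H,z)\in\mathcal{P}(X)$ and, since the contracted product commutes with the flat base change $\eta\to S$, its generic fibre is $Y_\eta\times^{G_\eta}H_\eta\simeq Z'$ with pointing induced by $y_\eta$; Lemma \ref{lemmadelTERZOCAP} applied to the morphism $Y_\eta\to Z_\eta$ of torsors compatible with $u:G_\eta\to H'$ identifies this generic fibre with $(Z',H',z')$ as pointed torsors. The one genuinely technical step is the verification that $N_2$ is a normal closed subgroup scheme of $G$ flat over $S$; everything else is formal from the results already collected in the paper.
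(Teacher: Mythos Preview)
Your proof is correct and follows exactly the approach the paper sketches: take the scheme theoretic closure $\overline{N}$ of $\ker(u)$ in $G$, form $H:=G/\overline{N}$, and push forward the torsor. The paper explicitly says this argument is similar to that of Theorem \ref{teo2} and records only the resulting diagram, so you have simply written out in full the details that the paper leaves implicit, invoking the same references (\cite{EGAIV-2}, \cite{ANA}, Theorem \ref{teoRappre1}, Proposition \ref{propRUBATAadSGA}).
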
 \proof Set $N:=ker(u)$, then construct the scheme theoretic closure $\overline{N}$ of $N$ in $G$ and consider the quotient $H:=G/N$ in order to have the following diagram:
 $$\xymatrix{N\ar[r] \ar@{^{(}->}[d]& \overline{N} \ar@{^{(}->}[d]\\
 {G}_{\eta}\ar@{->>}[d]_{u}\ar[r] & {G}\ar@{->>}[d]\\
 H'\ar[r]\ar[d] & H\ar[d]\\
 {\eta}\ar[r] & S}$$
\endproof

Roughly speaking this means that if we are able to extend a triple then we can extend any triple which is a ``quotient'' of the previous one (i.e. the morphism between their group schemes is faithfully flat). In the following lemma we solve a similar problem: suppose we are able to extend a triple, then we want to know whether we can extend a triple that ``contains'' it. More precisely:

\begin{lemma}\label{lemmaVarAb2} Let $(Y,G,y)\in \mathcal{P}(X)$  and $(Y_{\eta},G_{\eta},y_{\eta})$ its generic fiber. Let $H'$ be a $K$-group scheme such that $u:G_{\eta}\hookrightarrow H'$ is a closed immersion and let $(Z',H',z')$ be the associated triple of $\mathcal{P}(X_{\eta})$. Then there exists a triple $(Z,H,z)\in \mathcal{P}(X)$ whose generic fiber is isomorphic to $(Z',H',z')$ if and only if there exists a group scheme $L$ finite and flat over $S$ whose generic fiber is isomorphic to $H'$.\end{lemma}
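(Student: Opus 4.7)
The forward direction is immediate: if $(Z,H,z)\in\mathcal{P}(X)$ has generic fiber $(Z',H',z')$, then setting $L:=H$ gives a finite flat $S$-group scheme with $L_{\eta}\simeq H_{\eta}\simeq H'$.

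For the converse, given $L$ finite flat over $S$ with $L_{\eta}\simeq H'$, my plan is to set $H:=L$ and construct $Z$ by a scheme-theoretic closure argument modelled on Lemma \ref{lemmaLACHIUSURATRSORE}. I would consider the finite flat $S$-group scheme $G\times_S L$, whose generic fiber is $G_{\eta}\times_K H'$; the closed immersion $(id,u):G_{\eta}\hookrightarrow G_{\eta}\times_K H'$ identifies $G_{\eta}$ with the graph of $u$ inside $(G\times_S L)_{\eta}$. Let $\tilde G$ denote the scheme-theoretic closure of this closed subgroup in $G\times_S L$. By EGA IV, Proposition 2.8.5 and Corollaire 2.8.6, $\tilde G$ is a closed, flat, finite $S$-subgroup scheme of $G\times_S L$, whose generic fiber is canonically isomorphic to $G_{\eta}$ via the first projection $p_1$, and whose second projection $p_2:\tilde G\to L$ extends $u$ on generic fibers.

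I would then define $Z:=(Y\times_S L)/\tilde G$ as an $X$-scheme, where $\tilde G$ acts on $Y\times_S L$ over $X$ by restriction of the natural $(G\times_S L)$-action:
\[
\tilde g\cdot(y,\ell)=\bigl(y\cdot p_1(\tilde g)^{-1},\ p_2(\tilde g)\cdot\ell\bigr).
\]
The right-translation action of $L$ on the second factor commutes with this $\tilde G$-action (a direct check), hence descends to a right $L$-action on $Z$; the section $z$ is the image of $(y,e_L):S\to Y\times_S L$. On generic fibers the construction specializes to the usual contracted product $Y_{\eta}\times^{G_{\eta}}H'$, since $\tilde G_{\eta}=G_{\eta}$ acts diagonally via $(id,u)$, so $(Z_{\eta},L_{\eta},z_{\eta})\simeq(Z',H',z')$.

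The main obstacle is verifying that $Z$ is representable as an honest $L$-torsor over $X$. Representability of the homogeneous space $(G\times_S L)/\tilde G$ over $S$ follows from Theorem \ref{teoRappre1}, since $\tilde G$ is finite (hence projective) and flat over $S$ while $G\times_S L$ is quasi-projective. Working fpqc-locally on $X$ via the trivialization $Y\times_X Y\simeq Y\times_S G$, the formation of $Z$ reduces to base-changing this homogeneous space; the crux is to identify $(G\times_S L)/\tilde G$ with $L$. The natural morphism $L\to(G\times_S L)/\tilde G$, $\ell\mapsto[(e,\ell)]$, is clearly generically an isomorphism (sending $\ell\mapsto\ell$), and the global identification reduces to showing the scheme-theoretic intersection $\tilde G\cap(\{e\}\times_S L)$ is trivial; since its generic fiber is trivial, this would follow for instance from $S$-flatness of the intersection via schematic density of the generic fiber in flat $S$-schemes, as used repeatedly in the proof of Theorem \ref{teoremaEVVAI}.
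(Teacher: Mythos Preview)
Your forward direction is fine and matches the paper. The converse, however, has a genuine gap: you insist on taking $H=L$, and this is exactly what does not work in general.

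The point where the argument breaks is the claim that $\tilde G\cap(\{e\}\times_S L)$ is flat over $S$. Intersections of flat closed subschemes are not flat in general, and here they typically are not. Take $S=\mathrm{Spec}\,R$ with $R=\mathbb{Z}_p[\zeta_p]$, $G=\mu_p$, $L=(\mathbb{Z}/p\mathbb{Z})_S$, and $u:G_\eta\stackrel{\sim}{\to}L_\eta$ the isomorphism $\zeta_p^i\mapsto i$. Then $\tilde G$ is the closure of $\{(\zeta_p^i,i)\}$ in $\mu_p\times(\mathbb{Z}/p\mathbb{Z})$, which one computes to be $\mathrm{Spec}\bigl(\prod_i R\bigr)$ embedded via $x\mapsto\zeta_p^i$ in the $i$-th component. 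Intersecting with $\{e\}\times L$ (i.e.\ imposing $x=1$) gives $\prod_i R/(\zeta_p^i-1)$, which is non-trivial and non-flat for $i\neq 0$. Consequently your map $L\to(G\times_S L)/\tilde G$ is not a monomorphism. In fact $p_2:\tilde G\to L$ is an isomorphism here, so the quotient identifies with $G=\mu_p$ via $g\mapsto[(g,e)]$, and your map becomes $\mathbb{Z}/p\mathbb{Z}\to\mu_p$, $i\mapsto\zeta_p^i$, which collapses on the special fibre; the resulting $L$-action on $\mu_p$ is not free there, so $Z$ is not an $L$-torsor.

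What the paper does is precisely to avoid fixing $H=L$. It dualises: writing $G=\mathrm{Spec}\,A$, $L=\mathrm{Spec}\,B$, it passes to the linear duals $A^\vee,B^\vee$ (cocommutative bialgebras with antipode), factors $\hat u^\vee:A_\eta^\vee\to B_\eta^\vee$ through $A_\eta^\vee\otimes B_\eta^\vee$, and then takes the flat $R_S$-quotient $C$ of $A^\vee\otimes B^\vee$ with generic fibre $B_\eta^\vee$ (EGA IV, 2.8.1--2.8.4). Dualising back, $H:=\mathrm{Spec}\,C^\vee$ is a \emph{new} finite flat model of $H'$, generally not isomorphic to $L$, and the construction produces simultaneously a group-scheme morphism $\psi:G\to H$ extending $u$. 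The triple is then the contracted product $Y\times^{G}H$ via $\psi$. In the example above this yields $H=\mu_p$ with $\psi=\mathrm{id}$. The essential idea you are missing is that one must manufacture the model $H$ and the extension $G\to H$ together; an arbitrary model $L$ need not receive any morphism from $G$.
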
 \proof If the triple $(Z,H,z)\in \mathcal{P}(X)$ exists just set $L:=H$. The other direction is the non abelian version of \cite{Ray}, Proposition 2.3.1 (a): we are in the following situation
$$\xymatrix{{G}_{\eta}\ar@{^{(}->}[d]_{u}\ar[r] & {G}\\
 H'\ar[r]\ar[d] & L\ar[d]\\
 {\eta}\ar[r] & S}$$
and we want to construct a $S$-finite and flat group scheme $H$ and a morphism $v:G\to H$ in order to obtain a cartesian diagram:
$$\xymatrix{{G}_{\eta}\ar@{^{(}->}[d]_{u}\ar[r] & {G}\ar@{-->}[d]^{v} \\
 H'\ar[r]\ar[d] & H\ar[d]\\
 {\eta}\ar[r] & S.}$$
We can assume $S$ to be affine so set $S:=Spec(R_S)$, $G:=Spec(A)$, $L:=Spec(B)$, $G_{\eta}:=Spec(A_{\eta})$ and $H':=Spec(B_{\eta})$ and consider the induced diagram
$$\xymatrix{{A}_{\eta} & {A}\ar[l] \\
 B_{\eta} \ar@{->>}[u]^{\hat{u}} & B\ar[l]\\
 {K}\ar[u] & R_S\ar[u]\ar[l]}$$ 
where  $\eta:=Spec(K)$. Now let $A^{\vee}$ and $B^{\vee}$ be the duals respectively of the commutative (but not necessarily cocommutative) Hopf $R_S$-algebras $A$ and $B$: they are cocommutative (but not necessarily commutative and not necessarily Hopf) $R_S$-bialgebras; the antipodal morphisms $S_A:A\to A$, $S_{B}:B\to B$, which are morphisms of $R_S$-algebras, are transformed into $R_S$-coalgebra morphisms $S_{A}^{\vee}:{A^{\vee}}\to {A^{\vee}}$, $S_{B}^{\vee}:B^{\vee}\to B^{\vee}$. Similarly let $A_{\eta}^{\vee}$  and $B_{\eta}^{\vee}$ be the cocommutative $K$-bialgebras, duals respectively of $A_{\eta}$  and $B_{\eta}$, provided with the $K$-coalgebra morphisms $S_{A_{\eta}}^{\vee}:{A_{\eta}^{\vee}}\to {A_{\eta}^{\vee}}$, $S_{B_{\eta}}^{\vee}:B_{\eta}^{\vee}\to B_{\eta}^{\vee}$, then consider the diagram

$$\xymatrix{
 {K}\ar[d] & R_S\ar[d]\ar[l]\\{A}_{\eta}^{\vee} \ar[d]_{\hat{u}^{\vee}}& {A}^{\vee}\ar[l] \\
 B_{\eta}^{\vee}  & B^{\vee}.\ar[l]}$$ 
 Now factor $\hat{u}^{\vee}=\varphi\circ \rho_{\eta} :{A}_{\eta}^{\vee}\to {B}_{\eta}^{\vee}$ where $\rho_{\eta}: {A}_{\eta}^{\vee}\to {A}_{\eta}^{\vee}\otimes {B}_{\eta}^{\vee}$ is the generic fiber of $\rho: {A}^{\vee}\to {A}^{\vee}\otimes {B}^{\vee}$, $x\mapsto x\otimes 1$ and $\varphi:{A}_{\eta}^{\vee}\otimes {B}_{\eta}^{\vee}\twoheadrightarrow {B}_{\eta}^{\vee}$, $x\otimes y \mapsto \hat{u}^{\vee}(x)\cdot y$ and consider the diagram 
 $$\xymatrix{
 {K}\ar[d] & R_S\ar[d]\ar[l]\\{A}_{\eta}^{\vee} \ar[d]_{\rho_{\eta}}& {A}^{\vee}\ar[d]^{\rho}\ar[l] \\{A}_{\eta}^{\vee}\otimes {B}_{\eta}^{\vee}\ar@{->>}[d]_{\varphi} & {A}^{\vee}\otimes {B}^{\vee}\ar[l]\\
 B_{\eta}^{\vee}.  & }$$
 According to \cite{EGAIV-2} (Proposition 2.8.1; (2.8.3); Proposition 2.8.4) we complete the previous diagram  by constructing $\varphi':{A}^{\vee}\otimes {B}^{\vee}\twoheadrightarrow C$, where $C$ is the only $R_S$-flat module quotient of ${A}^{\vee}\otimes {B}^{\vee}$ whose generic fiber is isomorphic to ${B}_{\eta}^{\vee}$ (it is moreover a  cocommutative bialgebra provided with a $R_S$-coalgebra morphism $S_C:C\to C$). Its dual $C^{\vee}$ is a commutative bialgebra, flat over $R_S$ whose generic fiber is isomorphic to $B_{\eta}$, and the dual morphism $S_{C}^{\vee}:C^{\vee}\to C^{\vee}$ gives $C^{\vee}$ a  Hopf algebra structure;  set $H:=Spec(C^{\vee})$, consider $\varphi'^{\vee}:C^{\vee}\to {A}\otimes {B}$ the dual morphism of $\varphi'$ then the composition $\rho^{\vee}\circ \varphi'^{\vee}: C^{\vee}\to A$ induces a morphism of $S$-group schemes $\psi: G\to H$ which allows us to construct the desired triple $(Z,H,z)$ as the contracted product of $(Y,G,y)$ via $\psi$.
\endproof

\begin{remark} The assumption that $u:G_{\eta}\to H'$ is a closed immersion is never used, but it is the only case of interest in our situation according to lemma \ref{lemFATTORIZZAGRUPPI}.
\end{remark}

\begin{remark} When $char(K)=0$ a group scheme $H$ as in the statement of Lemma \ref{lemmaVarAb2} always exists.
\end{remark}

\subsection{The case of an abelian scheme}
\label{sez:abel}

Let $S$ be any connected Dedekind scheme and assume that $X\to S$ is an abelian scheme (i.e. $X$ is a smooth and proper  $S$-group scheme with geometric connected fibers), let $0_X$ be the unity for the group law of $X$ and for any natural number $m$ let $m_X:X\to X$ denote the multiplication by $m$. One observes that $(X,{}_mX,0_X)$ is a triple over $X$ where ${}_mX:=ker(m_X)$. In \cite{No3} Nori proves that for any point $s\in S$ and for any triple $(Y',G',y')\in \mathcal{P}(X_{s})$  there exists a natural number $n$ and 
\begin{enumerate}
	\item a morphism of group schemes $u:{}_n(X_{s})\to G'$, 
	\item a morphism $X_{s}\to Y'$ commuting with the actions of ${}_n(X_{s})$ and $G'$
\end{enumerate}

hence in particular, $\pi_1(X_{s},0_{X_{s}})\simeq \underleftarrow{lim}_{n} ({}_n(X_{s}))$. It is clear that the triple $(X_{s},{}_n(X_{s}),0_{X_{s}})$ is isomorphic to the fiber in $s$ of $(X,{}_nX,0_{X})$ then we have the following 

\begin{proposition}\label{PropVarAb} Let $X$ be an abelian scheme over a connected Dedekind scheme $S$, then for every $s\in S$ the canonical morphism $$\varphi:\pi_{1}(X_{s},x_{s})\to
\pi_{1}(X,x)_{s}$$ is an isomorphism.  Moreover $\pi_1(X,0_X)\simeq \underleftarrow{lim}_{n} ({}_nX)$.
\end{proposition}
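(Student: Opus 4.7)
The strategy is to first prove the second assertion $\pi_1(X,0_X)\simeq \underleftarrow{\lim}_n ({}_nX)$ and to deduce the fiberwise statement from it together with Nori's result over $k(s)$. For each $n\geq 1$ the isogeny $[n]:X\to X$ endows $(X,{}_nX,0_X)$ with the structure of a pointed torsor in $\mathcal{P}(X)$, and the factorisations $[m]=[n]\circ [m/n]$ for $n\mid m$ assemble these into a directed subsystem. This yields a canonical morphism
$$\psi:\pi_1(X,0_X)\longrightarrow \Pi:=\underleftarrow{\lim}_n ({}_nX).$$
I plan to apply Lemma \ref{lemmaSURIETT} to $\psi$: by Proposition \ref{propGRUPPOFONDLINIDIRIDOT} the source is already the limit over the filtered set $J$ of dominated triples, so the criterion reduces to producing, for every dominated triple $(Y,G,y)\in \mathcal{P}(X)$, a morphism of triples $(X,{}_nX,0_X)\to (Y,G,y)$ for some $n$, equivalently a factorisation of $\rho:\pi_1(X,0_X)\to G$ through one of the canonical projections $\Pi\to {}_nX$.

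To build such a factorisation I would pass to the generic fibre. Theorem \ref{teoremaEVVAI} gives that $\varphi$ is faithfully flat, so the composition $\pi_1(X_\eta,0_{X_\eta})\to \pi_1(X,0_X)_\eta\to G_\eta$ is schematically dominant and $(Y_\eta,G_\eta,y_\eta)$ is itself dominated in $\mathcal{P}(X_\eta)$. Nori's cited theorem then provides an integer $n$, a faithfully flat morphism $u_\eta:{}_n(X_\eta)\to G_\eta$ and a morphism of pointed torsors $(X_\eta,{}_n(X_\eta),0_{X_\eta})\to (Y_\eta,G_\eta,y_\eta)$. Applying Lemma \ref{lemmaVarAb} to $(X,{}_nX,0_X)\in \mathcal{P}(X)$ and to the morphism $u_\eta$ produces a triple $(Z,H,z)\in \mathcal{P}(X)$ whose generic fibre is isomorphic to $(Y_\eta,G_\eta,y_\eta)$, together with a morphism of triples $(X,{}_nX,0_X)\to (Z,H,z)$.

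The main obstacle is to identify $(Z,H,z)$ with $(Y,G,y)$, so that the obtained morphism provides the desired factorisation of $\rho$. I expect this to follow from the uniqueness of $S$-flat scheme-theoretic closures (\cite{EGAIV-2}, Proposition 2.8.5): both $G$ and $H$ are finite flat $S$-group schemes receiving a schematically dominant morphism from $\pi_1(X,0_X)$ with the same generic fibre, so the kernels of these two morphisms agree on $\eta$ and must coincide with the unique $S$-flat closed subgroup scheme of $\pi_1(X,0_X)$ extending that common generic kernel; this forces $G\simeq H$ canonically, and hence $(Z,H,z)\simeq (Y,G,y)$ in $\mathcal{P}(X)$.

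Once $\pi_1(X,0_X)\simeq \Pi$ is established, the first assertion is immediate. For any $s\in S$, base change commutes with inverse limits of affine group schemes (as already used before isomorphism $(\dagger)$), so
$$\pi_1(X,0_X)_s\simeq \Pi_s\simeq \underleftarrow{\lim}_n ({}_nX)_s\simeq \underleftarrow{\lim}_n {}_n(X_s),$$
and Nori's theorem over the field $k(s)$ identifies the last term with $\pi_1(X_s,0_{X_s})$. A direct compatibility check, matching the triples $(X_s,{}_n(X_s),0_{X_s})$ with the fibres of $(X,{}_nX,0_X)$, shows that the resulting isomorphism coincides with the canonical morphism $\varphi$.
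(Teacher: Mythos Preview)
Your plan reverses the paper's order of argument, aiming to establish $\pi_1(X,0_X)\simeq\Pi$ first and then read off the fiberwise statement. The reversal is reasonable in principle, but the step you yourself flag as ``the main obstacle'' does not go through as written.

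The problem is the identification $(Z,H,z)\simeq (Y,G,y)$. You argue that $\ker(\rho)$ and $\ker(\rho')$ are $S$-flat closed subgroup schemes of $\pi_1(X,0_X)$ with the same generic fibre, and then invoke uniqueness of the flat closure. But nothing guarantees that these kernels are $S$-flat: over a Dedekind base a schematically dominant morphism of group schemes need not be faithfully flat, so its kernel need not be flat. Moreover, even if the kernels coincided, concluding $G\simeq H$ would require identifying each target with the quotient $\pi_1(X,0_X)/\ker$; since $\pi_1(X,0_X)$ is not of finite type and the maps are only schematically dominant (not a priori faithfully flat), this quotient description is not available. In effect you are asserting that two dominated triples over $X$ with isomorphic generic fibres are isomorphic, i.e.\ that the map $J\to J'$ used before $(\dagger)$ is injective; but that is essentially equivalent to $\varphi$ being a closed immersion, which is what you are trying to prove.

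The paper avoids this difficulty by proving the two assertions in the opposite order. It first shows, for each $s\in S$, that every $\pi_1(X,0_X)_s\to G_{i,s}$ factors through some ${}_n(X_s)$ (this is immediate from Nori's result applied over the field $k(s)$), yielding the fiberwise isomorphism directly via Lemma~\ref{lemmaSURIETT}. Only then does it prove $\pi_1(X,0_X)\simeq\Pi$: knowing that on \emph{every} fibre $(G_i)_s\to{}_n(X_s)$ is faithfully flat, one applies the fibre-by-fibre flatness criterion of \cite{EGAIV-3}, Th\'eor\`eme~11.3.10 at the finite level to conclude that each $G_i\to{}_nX$ is faithfully flat, hence $\psi$ is faithfully flat, and its kernel is trivial by the flat-closure argument on the generic fibre. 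Note the paper's own remark that this EGA criterion cannot be applied to $\psi$ itself precisely because $\pi_1(X,0_X)$ is not of finite type; the passage through the finite quotients $G_i$ is essential, and it is this passage that your generic-fibre-only strategy lacks.
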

\proof Every triple $(Y_s,G_s,y_s)$ over $X_s$ fiber of a triple $(Y,G,y)$ over $X$ is preceded by a triple $(X_{s},{}_n(X_{s}),0_{X_{s}})$ for a certain $n$ then every arrow $\pi_{1}(X,0_X)_{s}\to G_s$ factors through ${}_n(X_{s})$ hence the first assertion. Thus for any $s\in S$ the fiber $\psi_s:\pi_1(X,0_X)_s\to \underleftarrow{lim}_{n} ({}_n(X_s))$ is an isomorphism. As usual let $\eta$ be the generic point of $S$ and consider the canonical morphism $\psi:\pi_1(X,0_X)\to \underleftarrow{lim}_{n} ({}_n X)$. Assume for a moment that it is faithfully flat,  then it is sufficient to consider its kernel which is flat and then trivial since it coincides with the scheme theoretic closure of $\{1\}_{\eta}$, the kernel of $\psi_{\eta}$, in $\pi_1(X,0_X)$. In order to prove it is faithfully flat  then one has to show that for any $n\in \mathbb{N}$ the canonical morphism $\pi_1(X,0_X)\to {}_n X$ is faithfully flat too. To show this, considering the usual projective limit $\pi_{1}(X,0_X)=\underleftarrow{lim}_{i\in I} G_i$, it is sufficient to prove that whenever there is a morphism $G_i\to {}_n X$ then it is faithfully flat. But this is certainly true on the fibers, since $(X_{s},{}_n(X_{s}),0_{X_{s}})$ is a dominated triple for any $s\in S$ then $(G_i)_s\to {}_n (X_s)$ is faithfully flat. Thus $G_i\to {}_n X$ is faithfully flat by means of \cite{EGAIV-3} Théorème 11.3.10. Hence $\psi$ is an isomorphism according to \cite{DG}, I, \S 2, n$^{\circ}$ 3, Corollaire 2.9. \endproof

\begin{remark} That $\pi_1(X,0_X)$ is isomorphic to $\underleftarrow{lim}_{n} ({}_n X)$ can be proven directly without considering fibers, following what Nori did in \cite{No3}. We cannot use \cite{EGAIV-3} Théorème 11.3.10 for $\psi$ because in general the fundamental group scheme is not of finite type.
\end{remark}

Let $\eta$ be the generic point of $S$,  if $(Y',G',y')$ is a dominated triple over $X$ then there exists $n\geq 1$ such that $u:{}_n(X_{s})\to G'$ is faithfully flat and then, according to lemma \ref{lemmaVarAb}, there exists a triple $(Y,G,y)$ extending $(Y',G',y')$. Then we have the following

\begin{proposition}\label{PropVarAb2} When $X$ is an abelian scheme over a connected Dedekind scheme $S$, then every dominated triple $(Y',G',y')\in \mathcal{P}(X_{\eta})$ can be extended to a triple $(Y,G,y)\in  \mathcal{P}(X)$; moreover every non dominated triple $(Y',G',y')$ can be extended to a triple $(Y,G,y)$ if and only if there exists a finite and flat $S$-group scheme $H$ whose generic fiber is isomorphic to $G'$. 
\end{proposition}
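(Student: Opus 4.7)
The plan is to derive both halves from Proposition \ref{PropVarAb} together with Lemmas \ref{lemmaVarAb} and \ref{lemmaVarAb2}. For the first assertion, starting from a dominated triple $(Y',G',y')\in\mathcal{P}(X_{\eta})$, the associated morphism $\rho:\pi_{1}(X_{\eta},0_{X_{\eta}})\to G'$ is schematically dominant by Corollary \ref{corollRIDOTTO}, hence faithfully flat by Remark \ref{ossQUOZWW}. Proposition \ref{PropVarAb} identifies $\pi_{1}(X_{\eta},0_{X_{\eta}})$ with $\varprojlim_{n}{}_n(X_{\eta})$; since $G'$ is finite its Hopf algebra is finitely generated, so a standard direct-limit argument dual to the inverse system forces $\rho$ to factor through some ${}_n(X_{\eta})$. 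The resulting $u:{}_n(X_{\eta})\to G'$ is then faithfully flat, and the triple $(X,{}_nX,0_X)\in\mathcal{P}(X)$ has generic fiber $(X_{\eta},{}_n(X_{\eta}),0_{X_{\eta}})$; applying Lemma \ref{lemmaVarAb} with this triple and the morphism $u$ produces the desired extension of $(Y',G',y')$.

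For the second assertion, the \emph{only if} direction is immediate by taking $H:=G$. Conversely, suppose a finite flat $S$-group scheme $H$ with $H_{\eta}\simeq G'$ is given. Applying Lemma \ref{lemmaSCIAPO} to the morphism $\rho:\pi_{1}(X_{\eta},0_{X_{\eta}})\to G'$ associated with $(Y',G',y')$, we factor it as a schematically dominant $\rho':\pi_{1}(X_{\eta},0_{X_{\eta}})\to G''$ followed by a closed immersion $G''\hookrightarrow G'$. By the first part of the proposition, the associated dominated triple $(Y'',G'',y'')$ extends to a triple $(Z,G'',z)\in\mathcal{P}(X)$. We are then in exactly the setting of Lemma \ref{lemmaVarAb2}, with the starting triple $(Z,G'',z)\in\mathcal{P}(X)$, the closed immersion $G''\hookrightarrow G'$ on generic fibers, and with $L:=H$ playing the role of the required $S$-model of the ambient group scheme; the lemma then furnishes the desired extension of $(Y',G',y')$.

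The main technical point is the factorisation step in the first part: passing from the abstract faithful flatness of $\rho$ to the concrete faithfully flat morphism $u:{}_n(X_{\eta})\to G'$ at a finite level is what allows Lemma \ref{lemmaVarAb} to apply at all. Once this is secured, both halves become a matter of bookkeeping around the two extension lemmas, with the hypothesis on $H$ in the second part entering precisely at the unique point where Lemma \ref{lemmaVarAb2} demands a finite flat $S$-model of the target group scheme.
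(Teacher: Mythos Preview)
Your argument is correct and follows the same route as the paper: extend dominated triples via a faithfully flat $u:{}_n(X_{\eta})\to G'$ together with Lemma~\ref{lemmaVarAb}, then handle the non-dominated case by passing to the dominated sub-triple (Lemma~\ref{lemmaSCIAPO}), extending it, and finishing with Lemma~\ref{lemmaVarAb2}. Two small remarks: the paper obtains the map $u$ directly from Nori's result quoted before Proposition~\ref{PropVarAb} rather than from your direct-limit factorisation, and in your second paragraph the triple over $X$ should be written $(Z,\widetilde{G},z)$ for some finite flat $S$-group scheme $\widetilde{G}$ with $\widetilde{G}_{\eta}\simeq G''$, not $(Z,G'',z)$, since $G''$ lives only over $\eta$.
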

\proof That every dominated triple can be extended follows from previous discussion. Then apply lemma \ref{lemmaVarAb2} to obtain the statement on non dominated triples.\endproof

\textbf{Acknowledgements}. This paper is part
of my PhD thesis. I would like to thank my advisor Michel Emsalem
for his guidance and his constant encouragement. This work has
been partially supported by the Università degli Studi di
Milano.  I also would like to thank Matthieu Romagny, Carlo
Gasbarri and Dajano Tossici for useful comments and discussions. Finally I would like to thank an anonymous referee for his suggestions, corrections and some improvements.

\begin{flushright}Marco Antei\\ Laboratoire Paul Painlevé, U.F.R. de
Mathématiques\\Université des Sciences et des Techonlogies de
Lille 1\\ 59 655 Villeneuve d'Ascq\\\end{flushright}

\begin{flushright} E-mail:\\ \texttt{antei@math.univ-lille1.fr}\\ \texttt{marco.antei@gmail.com}
\end{flushright}

\end{document}